\newcommand{\titre}{Automorphisms of quantum matrices}
\newenvironment{proof}{\begin{trivlist}\item[]{\it
Proof.}}{\hfill$\square$\end{trivlist}}
\newtheorem{theorem}{Theorem}[section]
\newtheorem{corollary}[theorem]{Corollary}
\newtheorem{lemma}[theorem]{Lemma}
\newtheorem{proposition}[theorem]{Proposition}
\newtheorem{conjecture}[theorem]{Conjecture}
\newcommand{\gc}{ [ \hspace{-0.65mm} [}
\newcommand{\dc}{]  \hspace{-0.65mm} ]}
\newcommand{\ia}{i,\alpha}
\newcommand{\aut}{{\rm Aut}}
\newcommand{\ideal}[1]{\langle {#1}\rangle}
\def\N{{\mathbb{N}}}
\def\co{{\mathcal O}}
\def\oqmn{\co_q(M_n)}
\def\oqmtwo{\co_q(M_2)}
\def\oqmm13{\co_q(M_{1,3})}
\def\oqm23{\co_q(M_{2,3})}
\def\oqmmn{\co_q(M_{m,n})}
\def\ia{i,\alpha}
\begin{document}

\title{\titre} \author{S Launois\thanks{The first author is grateful for the financial
support of EPSRC first grant \textit{EP/I018549/1}.}~ and T H Lenagan}
\date{}

\maketitle

\begin{center}
{\small {\it This paper is dedicated to Kenny Brown and Toby Stafford for their $|A_5|$-th birthday.}}
\end{center}

\begin{abstract}
We study the automorphism group of the algebra $\oqmn$ of $n \times n$ generic quantum matrices. We provide evidence for our conjecture that this group is generated by the transposition and the subgroup of those automorphisms acting on the canonical generators of $\oqmn$ by multiplication by scalars. Moreover, we prove this conjecture in the case when $n=3$.
\end{abstract}

\vskip .5cm
\noindent
{\em 2010 Mathematics subject classification:} 20G42, 16W20, 16T20, 17B40.

\vskip .5cm
\noindent
{\em Key words:} Quantum matrices, automorphisms.


\section*{Introduction}

Let $\mathbb{K}$ be a field and $q$ be an element in 
$\mathbb{K}^*:= \mathbb{K}\setminus \{0\}$. We assume that $q$ is not a root of unity. 
The quantisation of the ring of
regular functions on $m \times n$ matrices with entries in 
$ \mathbb{K}$ is denoted by $\oqmmn$; this  is
the $ \mathbb{K}$-algebra generated by the $m \times n $ indeterminates
$Y_{\ia}$, $1 \leq i \leq m$ and $ 1 \leq \alpha \leq n$, subject to the
following relations:\\ \[
\begin{array}{ll}
Y_{i, \beta}Y_{i, \alpha}=q^{-1} Y_{i, \alpha}Y_{i, \beta},
& (\alpha < \beta); \\
Y_{j, \alpha}Y_{i, \alpha}=q^{-1}Y_{i, \alpha}Y_{j, \alpha},
& (i<j); \\
Y_{j,\beta}Y_{i, \alpha}=Y_{i, \alpha}Y_{j,\beta},
& (i <j,  \alpha > \beta); \\
Y_{j,\beta}Y_{i, \alpha}=Y_{i, \alpha} Y_{j,\beta}-(q-q^{-1})Y_{i,\beta}Y_{j,\alpha},
& (i<j,  \alpha <\beta). 
\end{array}
\]
It is well known that $\oqmmn$ is a Noetherian domain that can be 
presented as an iterated Ore extension over the base field $\mathbb{K}$ 
with the indeterminates $Y_{\ia}$ adjoined in lexicographic order. 
Moreover, as all the defining relations of the algebra are quadratic, 
$\oqmmn$ is a graded 
algebra with all the indeterminates $Y_{\ia}$ in degree 1.

This article is concerned with the symmetries of quantum matrices. More
precisely, we are studying the automorphism group of this family of algebras.
As usual in the quantum setting, it is to be 
expected that the automorphism group of
$\oqmmn$ is quite small (see for instance \cite{ll} and references therein).

In the case of $\oqmmn$, there are two classes of automorphisms that are well known:
\begin{enumerate}
\item The set $\mathcal{H}$ of automorphisms acting on the indeterminates $Y_{\ia}$ by multiplication by nonzero scalars; this subgroup of $\aut(\oqmmn)$ is isomorphic to the torus $(\mathbb{K}^*)^{m+n-1}$ \cite[Corollary 4.11 and its proof]{ll};
\item In the square case, where $m=n$, the transposition $\tau$ sending $Y_{\ia}$ to $Y_{\alpha, i}$ is an automorphism that generates a subgroup of order 2 of $\aut(\oqmn)$.
\end{enumerate}

In the case where $m \neq n$, we proved in \cite{ll} that
$\aut(\oqmmn)=\mathcal{H}$. Unfortunately, the methods used in that article 
are not sufficient to resolve the square case. 
However, it was proved by Alev and
Chamarie \cite{ac} that $\aut(\oqmtwo) = \mathcal{H} \rtimes \langle \tau
\rangle$. In view of these results, it is natural to conjecture the following
result. 

\begin{conjecture}
\label{conj}
$\aut(\oqmn) = \mathcal{H} \rtimes \langle \tau \rangle.$
\end{conjecture}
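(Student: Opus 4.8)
The plan is to split the problem into two parts: first reduce an arbitrary automorphism to a graded one, and then classify the graded automorphisms by their linear action on the space of generators. Throughout I would exploit that $\oqmn$ is a connected $\N$-graded Noetherian domain generated in degree one, so that its only units are the nonzero scalars, together with the standard fact that (for $q$ not a root of unity) its centre is the polynomial ring $\mathbb{K}[\detq]$ generated by the quantum determinant, which is homogeneous of degree $n$.

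\emph{Reduction to the graded case.} Write $\sigma(Y_{\ia})=\sum_d f_{\ia}^{(d)}$ in homogeneous components. I would first restrict $\sigma$ to the centre: since $\sigma$ preserves $\mathbb{K}[\detq]\cong\mathbb{K}[t]$, it acts there as an affine map $\detq\mapsto a\,\detq+b$ with $a\in\mathbb{K}^*$. A filtration argument, passing to the associated graded algebra for the degree filtration (which is again $\oqmn$ because the defining relations are homogeneous), should then bound the top degrees of the $f_{\ia}$, force the constant terms $f_{\ia}^{(0)}$ to vanish, and assemble the top-degree components into a graded automorphism $\overline{\sigma}$; the absence of nontrivial units is what rules out ``affine shifts'' of the generators and forces $b=0$. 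If this succeeds, then after replacing $\sigma$ by $\overline{\sigma}$ I may assume $\sigma(V)=V$, where $V=\bigoplus_{\ia}\mathbb{K}Y_{\ia}$ is the degree-one component.

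\emph{Classification of graded automorphisms.} A graded automorphism is determined by a linear automorphism $A$ of $V$ whose second tensor power $A\otimes A$ stabilises the subspace $R\subseteq V\otimes V$ of quadratic relations. I would translate this condition into explicit linear equations on the matrix of $A$ in the basis $\{Y_{\ia}\}$; the torus $\mathcal{H}$ and the transposition $\tau$ give the obvious solutions, and the claim is that they exhaust them. To prune the solution set I would use the distinguished normal elements of $\oqmn$: a direct check of the relations shows that the anti-diagonal corner generators $Y_{1,n}$ and $Y_{n,1}$ are normal, whereas $Y_{1,1}$ and $Y_{n,n}$ are not (they appear in the quadratic correction term of the fourth relation). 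Since $\sigma$ carries normal elements to normal elements and the only degree-one normal elements should, for $q$ not a root of unity, be the scalar multiples of $Y_{1,n}$ and $Y_{n,1}$, the map $A$ must permute the two lines $\mathbb{K}Y_{1,n}$ and $\mathbb{K}Y_{n,1}$. This is exactly the identity-versus-$\tau$ dichotomy, and composing with $\tau$ if necessary I may assume $A$ fixes each corner line. I would then propagate this rigidity inward, using that the row and column subalgebras isomorphic to $\oq(M_{1,n})$ and $\oq(M_{n,1})$, the border quantum minors, and the tower of height-one $\mathcal{H}$-prime ideals are all permuted by $\sigma$; their known poset structure (the combinatorics of Cauchon diagrams) should force $A$ to be diagonal, that is $A\in\mathcal{H}$.

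\emph{Main obstacle.} The genuinely hard step is the reduction to the graded case, because preserving only $\mathbb{K}[\detq]$ controls the generators rather weakly, and one must bound the top degrees of the $f_{\ia}$ and eliminate lower-order terms directly. Here the finer invariants coming from Cauchon's deleting-derivation embedding of $\oqmn$ into a quantum affine space, and the resulting classification of its $\mathcal{H}$-prime ideals, are the natural tools; it is precisely the size of this poset for general $n$ that leaves the conjecture open, whereas for $n=3$ the poset is small enough to carry out the whole analysis, both the degree bounds and the final linear classification, by an explicit finite check. This is how I would establish the theorem in the case $n=3$.
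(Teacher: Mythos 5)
The central step of your plan, the ``reduction to the graded case'', is where the genuine gap lies, and it is precisely the point at which this conjecture resists proof (the paper proves it only for $n=3$; the general case was settled later, by Yakimov, with quite different methods). Two distinct problems arise. First, the top-degree components of an automorphism of a filtered algebra do not in general assemble into an automorphism of the associated graded: already for the commutative polynomial ring the triangular automorphism $x\mapsto x$, $y\mapsto y+x^2$ has top-degree part $x\mapsto x$, $y\mapsto x^2$, which is not injective. What is true here (this is \cite[Corollary 4.3]{ll}, quoted as Proposition \ref{graduation2}) is that the \emph{lowest}-degree components behave well: for $x$ homogeneous of degree $d$ one has $\sigma(x)=y_d+y_{>d}$ with $y_d\neq 0$, and the paper's entire Section \ref{reductionstep} is devoted to showing that, after composing with a suitable $g\in\mathcal{H}\rtimes\langle\tau\rangle$, these degree-one leading terms are exactly the $Y_{\ia}$, i.e.\ $g\circ\sigma(Y_{\ia})-Y_{\ia}\in R_{\geq 2}$. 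Second, and more seriously, even once such a graded ``leading part'' $\overline{\sigma}\in G$ is produced you cannot ``replace $\sigma$ by $\overline{\sigma}$'': knowing that the leading part lies in $G$ says nothing about the higher-degree tails, and the conjecture is precisely the assertion that an automorphism with $\sigma(Y_{\ia})-Y_{\ia}\in R_{\geq 2}$ is the identity. Your proposal never addresses these tails, so it does not prove the statement even modulo the sketched graded classification; nor does your suggested $n=3$ argument (``the poset of $\mathcal{H}$-primes is small enough for a finite check'') engage with them.

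For comparison, the paper's route to the reduction step is close in spirit to your second part: it uses the classification of height one primes (whence the action on the normal element $b_1=Y_{1,n}$, which plays the role of your observation that the only degree-one normal elements are the corner generators $Y_{1,n}$ and $Y_{n,1}$), the stability of the row and column ideals $K_r$, $K_c$, and the known result $\aut(\co_q(M_{n-1,n}))=\mathcal{H}$ for the quotient $R/K_r$. The part you label the ``main obstacle'' is then attacked, for $n=3$ only, by completely different means: the Alev--Chamarie expansion $\sigma(x)=x+\sum_{l>0}d_l(x)$ in terms of derivations, the computation of $\der(R)$ from \cite{ll2} (which forces $\sigma(b_i)=b_i$ for all the normal quantum minors $b_i$), and then an explicit degree count propagated through quantum Laplace expansions and the anti-endomorphism $\Gamma$. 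If you want to pursue your plan, the piece you must supply is an argument that an automorphism fixing all the $Y_{\ia}$ modulo $R_{\geq 2}$ is the identity; everything else in your sketch is either already available in the literature or comparatively routine.
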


The main aim of this article is to provide evidence for this conjecture, and
also to prove it in the case when $n=3$. 

Set $R:=\oqmn$,  $G:=\mathcal{H} \rtimes \langle \tau \rangle$, and 
let $\sigma \in \aut(R)$. In Section \ref{reductionstep}, we prove that 
there exists $g \in G$ such that:
\begin{equation}
\label{eqIntro}
g\circ \sigma (Y_{\ia}) -Y_{\ia} \mbox{ is a sum of homogeneous terms of 
degree }\geq 2.
\end{equation}
Of course, we conjecture that $g\circ \sigma =\mathrm{id}$. 
The above result (\ref{eqIntro}) already has interesting consequences. 
Indeed, it follows from a result of Alev and Chamarie 
\cite[Lemme 1.4.2]{ac} that such a 
$g \circ \sigma$ belongs to the subalgebra 
of $\mathrm{End}_{\mathbb{K}}(R)$ generated by the derivations of $R$. 
As the derivations of $R$ were computed in \cite{ll2}, we can for instance 
prove that every normal element of $R$ is fixed by $g \circ \sigma$ 
(an element $u$ is  {\em normal} in $R$ if $uR=Ru$). 

Before going any further, let us mention that the normal elements of $R$ have been described in \cite{ll}. They are closely related to distinguished elements of $R$ called {\it quantum minors}. Recall that if $I:=\{i_1<\dots <i_t \},\Lambda=\{\alpha_1 < \dots < \alpha_t\} \subseteq \{1, \dots ,n \}$ with $|I|=|\Lambda|=t \neq 0$, then the quantum minor $[I|\Lambda]=[i_1, \dots ,i_t|\alpha_1, \dots ,\alpha_t]$ is defined by:
$$[I|\Lambda]=[i_1, \dots ,i_t|\alpha_1, \dots ,\alpha_t]:=\sum_{w \in S_t} (-q)^{l(w)} Y_{i_1, \alpha_{w(1)}}Y_{i_2, \alpha_{w(2)}}\cdots Y_{i_t ,\alpha_{w(t)}},$$
where $l$ is the usual length function on permutations.

It is well known that the quantum minors $b_i$ with $i \in \{1, \dots ,
2n-1\}$ defined by 
$$
b_i:=\left\{ \begin{array}{ll} [1,\dots ,i | n-i+1, \dots
,n] & \mbox{ if }1\leq i \leq n \\
~ [i-n+1, \dots ,n | 1, \dots ,2n-i ] & \mbox{ otherwise,} ~\\ 
\end{array} \right.
$$
are normal in $R$, so that the main result of Section \ref{reductionstep}
shows that 
$$
g \circ \sigma (b_i)=b_i, ~\mbox{for all }i \in \{1, \dots ,
2n-1\}.
$$ 
Note that $\Delta:=b_n$ is the so-called {\it quantum determinant} of $R$. As
we assume that $q$ is not a root of unity, the centre of $R$ is precisely the
polynomial algebra in the quantum determinant $\Delta$, and so the
previous result shows in particular that every element in the centre of $R$ is
left invariant by $g\circ \sigma$.

In Section 2, we use (\ref{eqIntro}) as well as graded arguments in order to
prove that when $n=3$ we indeed have $g\circ \sigma=\mathrm{id}$, so that
Conjecture \ref{conj} is true in this case.

Throughout this paper, we set $\gc a,b \dc := \{ i\in{\mathbb N} \mid a\leq
i\leq b\}$ and we assume $n \geq 3$. 



\section{The automorphism group of $\oqmn$: Reduction step}
\label{reductionstep}

In this section, we investigate the group of automorphisms of $R=\oqmn$. We
will be using graded arguments, as well as the 
induced actions of $\aut(R)$ on
the set of height one prime ideals, on the centre and on the set of
normal elements of $R$.


In the sequel, we will use several times the following well-known result
concerning normal elements of $R=\oqmn$.

\begin{lemma}
\label{utile}
Let $u$ and $v$ two nonzero normal elements of $R$ such that $\ideal{u}=\ideal{v}$. Then
there exist $\lambda,\mu \in  \mathbb{K}^*$ such that $u=\lambda v $ and $v =\mu u$.
\end{lemma}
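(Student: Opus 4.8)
The plan is to reduce the statement to the determination of the units of $R$. Since $u$ is normal we have $uR=Ru$, and this set is already a two-sided ideal containing $u$; as any two-sided ideal containing $u$ must contain $RuR=uR=Ru$, it is precisely the two-sided ideal generated by $u$, so $\ideal{u}=uR=Ru$. The same applies to $v$, and hence the hypothesis $\ideal{u}=\ideal{v}$ becomes the equality of right ideals $uR=vR$.

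From $u\in vR$ and $v\in uR$ I would then write $u=vr$ and $v=us$ for suitable $r,s\in R$. Substituting one into the other gives $u=u(sr)$ and $v=v(rs)$, and since $R$ is a domain and $u,v$ are nonzero, cancellation yields $sr=rs=1$. Thus $r$ and $s$ are units of $R$, mutually inverse.

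It then remains to observe that the units of $R$ are exactly the nonzero scalars, and this is where the graded structure of $\oqmn$ is used. Recall that $R$ is a connected $\mathbb{N}$-graded domain, with $R_0=\mathbb{K}$ and all the generators $Y_{\ia}$ in degree $1$. For nonzero $r,s$, write them as sums of homogeneous components between their lowest and highest degrees; because $R$ is a domain, the lowest-degree (resp.\ highest-degree) component of the product $rs$ equals the product of the lowest-degree (resp.\ highest-degree) components of $r$ and $s$, and in particular is nonzero. Applying this to $rs=1$, which is homogeneous of degree $0$, forces both $r$ and $s$ to be homogeneous of degree $0$, that is, to lie in $R_0=\mathbb{K}^*$. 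Writing $r=\lambda$ and $s=\mu$ and using that scalars are central in $R$, we obtain $u=vr=\lambda v$ and $v=us=\mu u$ with $\lambda,\mu\in\mathbb{K}^*$ (and necessarily $\mu=\lambda^{-1}$), which is the assertion of the lemma.

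The computation is essentially routine: the only point requiring a little care is the identification of the unit group of $R$ with $\mathbb{K}^*$, which genuinely relies on $\oqmn$ being a connected graded domain. Everything else follows formally from the definition of a normal element together with cancellation in a domain.
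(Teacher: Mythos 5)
Your proof is correct and complete. The paper itself states this lemma without proof, labelling it a well-known fact about normal elements of $R$, so there is no argument to compare against; your reduction to the unit group of $R$ via $\ideal{u}=uR=Ru$, cancellation in a domain, and the observation that a connected $\mathbb{N}$-graded domain with $R_0=\mathbb{K}$ has unit group $\mathbb{K}^*$ is exactly the standard argument the authors are implicitly relying on.
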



\subsection{Torus automorphisms of  $\oqmn$}

Recall from the Introduction that $\mathcal{H}$ denote the subgroup of those
automorphisms of $R$ acting on the indeterminates $Y_{\ia}$ by multiplication
by nonzero scalars. The proof of \cite[Corollary 4.11]{ll} shows that
$\mathcal{H}$ is isomorphic to the torus $(\mathbb{K}^*)^{2n-1}$. More
precisely, for any $h:=(a_1,\dots,a_n,b_1,\dots,b_{n-1})\in
(\mathbb{K}^*)^{2n-1}$, define an automorphism $\sigma_h$ in $\mathcal{H}$ as
follows: $$\sigma_{h}( Y_{\ia})=\left\{ \begin{array}{ll} a_ib_{\alpha}Y_{\ia}
& \mbox{ if } \alpha <n \\ a_iY_{\ia} & \mbox{ if } \alpha =n. \end{array}
\right. $$ The proof of \cite[Corollary 4.11]{ll} shows that the map $h
\mapsto \sigma_h$ from $(\mathbb{K}^*)^{2n-1}$ to $\mathcal{H}$ is an
isomorphism. The elements of $\mathcal{H}$, that is the automorphisms
$\sigma_h$ with $h \in (\mathbb{K}^*)^{2n-1}$, are called the {\it torus
automorphisms} to $R$.


\subsection{Height one prime ideals of $\oqmn$}

In \cite[Propositions 3.5 and 3.6]{ll}, we have described the height one
primes of $R$. We now recall the results that we have obtained. 

\begin{proposition} \label{0stratum}
For any height one prime ideal $P$ of $\oqmn$, there exists an irreducible polynomial $V=\sum_{i_1=0}^{r_1} \dots \sum_{i_n=0}^{r_n}
a_{i_1,\dots,i_n} X_1^{i_1} \dots X_n^{i_n} \in  \mathbb{K}[X_1,\dots,X_n]$ (where $r_i = \deg_{X_i}V$ for all $i \in \{1,\dots,n\}$) such that
$P = \ideal{u} $, where 
$$
u:=\sum_{i_1=0}^{r_1} \dots
\sum_{i_n=0}^{r_n} a_{i_1,\dots,i_n} \prod_{j=1}^n b_{j}^{i_j}
b_{n+j}^{r_j-i_j}.  
$$ 
(By convention, we set $b_{2n}:=1$.)\\
Moreover, $u$ is normal in $R$. 
\end{proposition}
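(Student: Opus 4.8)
The plan is to control the height one primes of $R=\oqmn$ through the Goodearl--Letzter $\mathcal{H}$-stratification of $\spec(R)$, reducing the problem to a commutative one by inverting the normal corner minors. Concretely, I would let $E$ denote the multiplicative set generated by the normal elements $b_1,\dots,b_{2n-1}$ (these are normal by hypothesis, hence generate an Ore set) and pass to the localization $\wr:=R[E^{-1}]$. The $b_i$ are $\mathcal{H}$-eigenvectors, and the explicit description of the $\mathcal{H}$-primes of $\oqmn$ shows that every nonzero $\mathcal{H}$-prime contains one of the $b_i$; inverting all the $b_i$ therefore destroys every nonzero $\mathcal{H}$-prime, so that $\wr$ is $\mathcal{H}$-simple. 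By the stratification theory this forces $\spec(\wr)$ to be governed entirely by its centre, and the height one primes of $R$ containing no $b_i$ will correspond bijectively, by contraction and extension, to the height one primes of $Z(\wr)$.

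The heart of the argument, and the main obstacle, is a commutation computation for the corner minors. For each generator $Y_{k,\gamma}$ one has $b_i Y_{k,\gamma}=q^{c_i(k,\gamma)}Y_{k,\gamma}b_i$ for suitable integers $c_i(k,\gamma)$, and I would prove the key identity $c_j(k,\gamma)=c_{n+j}(k,\gamma)$ for every $j\in\{1,\dots,n\}$ and every generator; that is, $b_j$ and its partner $b_{n+j}$ quasi-commute with each $Y_{k,\gamma}$ through the same power of $q$. Granting this, the ratios $z_j:=b_j b_{n+j}^{-1}$ (with $z_n=b_n b_{2n}^{-1}=\Delta$) commute with every generator and hence lie in $Z(\wr)$. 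The same identity has a second, crucial payoff for the normality claim: any element $u=\sum a_{i_1,\dots,i_n}\prod_{j} b_j^{i_j}b_{n+j}^{r_j-i_j}$ has all of its monomials quasi-commuting with a fixed $Y_{k,\gamma}$ through the common power $q^{\sum_j r_j c_{n+j}(k,\gamma)}$, independent of the exponent vector $(i_1,\dots,i_n)$; summing over monomials then gives $u\,Y_{k,\gamma}=q^{\sum_j r_j c_{n+j}(k,\gamma)}Y_{k,\gamma}\,u$, so $u$ is normal in $R$. Establishing $c_j=c_{n+j}$ is where the real work lies, since it rests on the precise combinatorics of how these specific quantum minors interact with the generators.

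Finally I would identify the centre precisely and read off the polynomial. Using the $\mathcal{H}$-simplicity (or a direct transcendence-degree computation) one shows $z_1,\dots,z_n$ are algebraically independent and $Z(\wr)=\mathbb{K}[z_1^{\pm1},\dots,z_n^{\pm1}]$. A height one prime $P$ of $R$ containing no $b_i$ survives in $\wr$ and contracts to a height one prime of this Laurent polynomial ring, which is principal, generated by an irreducible Laurent polynomial; substituting $z_j=b_j b_{n+j}^{-1}$ and clearing denominators by the monomial $\prod_j b_{n+j}^{r_j}$ produces an element of the form $u=\sum a_{i_1,\dots,i_n}\prod_j b_j^{i_j}b_{n+j}^{r_j-i_j}$, whose coefficients assemble into an irreducible $V\in\mathbb{K}[X_1,\dots,X_n]$ with $r_j=\deg_{X_j}V$. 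Since $u$ is normal by the previous paragraph and $\ideal{u}$ and $P$ have the same extension to $\wr$, Lemma \ref{utile} yields $P=\ideal{u}$. The remaining height one primes are those containing some $b_i$, namely the finitely many $\ideal{b_i}$ which are precisely the height one $\mathcal{H}$-primes; these are of the required shape as well (for instance $V=X_j$ gives $u=b_j$), so they can be checked directly to complete the classification.
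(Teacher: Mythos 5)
The paper does not actually prove this proposition: it is quoted from \cite{ll} (Propositions 3.5 and 3.6 there), so there is no in-paper argument to compare against. Your sketch correctly reconstructs the strategy of the cited proof: localise at the Ore set generated by the normal corner minors $b_1,\dots,b_{2n-1}$, use the fact that every nonzero $\mathcal{H}$-prime contains some $b_i$ to conclude that the localisation $\widehat{R}$ is $\mathcal{H}$-simple, identify $Z(\widehat{R})$ as the Laurent polynomial ring in the $z_j=b_jb_{n+j}^{-1}$ (this is \cite[Theorem 3.4]{ll}, and your identity $c_j(k,\gamma)=c_{n+j}(k,\gamma)$ is precisely its computational core), and pull height one primes back through the Goodearl--Letzter homeomorphism; your derivation of the normality of $u$ from the fact that all monomials $\prod_j b_j^{i_j}b_{n+j}^{r_j-i_j}$ quasi-commute with each generator through the same power of $q$ is also the right argument. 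The one step you gloss is the descent from $P\widehat{R}=u\widehat{R}$ to $P=\ideal{u}$: Lemma \ref{utile} only applies once you already know that $P$ is generated by a normal element, so you need either the fact that $\oqmn$ is a noetherian UFD in the sense of Chatters (every height one prime is generated by a normal element), or a direct argument that $\ideal{u}$ is saturated with respect to the Ore set of the $b_i$'s. With that supplied, your outline is sound and is essentially the argument of the source.
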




\subsection{$q$-commutation, gradings and automorphisms}

Recall that the relations that define $R=\oqmn$ are all quadratic, so that
$R=\oplus_{i \in \N} R_i$ is a $\mathbb{N}$-graded algebra, the canonical
generators $Y_{i,\alpha}$ of $R$ having degree one.  Note, for later use, that a
$t \times t$ quantum minor of $R$ is a homogeneous element of degree $t$
with respect to this grading of $R$. In the sequel, $R$ will always be
endowed with this grading.

In \cite[Corollary 4.3]{ll}, we have shown the following result.

\begin{proposition}
 \label{graduation2} Let $\sigma$ be an automorphism of $R=\oqmn$ and $x$ an
 homogeneous element of degree $d$ of $R$. Then $\sigma(x)=y_d + y_{> d}$,
 where $y_d \in R_d \setminus \{0\}$ and $y_{>d} \in R_{> d}$. 
 \end{proposition}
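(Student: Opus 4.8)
The plan is to reformulate the statement in terms of the augmentation ideal $\mathfrak{m}:=R_{\geq 1}=\bigoplus_{i\geq 1}R_i$. Since $R$ is connected graded and generated in degree one, one has $R_{\geq d}=\mathfrak{m}^{d}$ for every $d\geq 0$; consequently the conclusion is equivalent to the single assertion that every automorphism $\sigma$ of $R$ satisfies $\sigma(\mathfrak{m})=\mathfrak{m}$. Indeed, granting this, $\sigma(R_{\geq d})=\sigma(\mathfrak{m}^{d})=\mathfrak{m}^{d}=R_{\geq d}$ for all $d$, so for homogeneous $x$ of degree $d$ we get $\sigma(x)\in R_{\geq d}$, whence $\sigma(x)=y_d+y_{>d}$ with $y_d\in R_d$; and since also $\sigma(R_{\geq d+1})=R_{\geq d+1}$ while $x\notin R_{\geq d+1}$, bijectivity of $\sigma$ forces $\sigma(x)\notin R_{\geq d+1}$, i.e. $y_d\neq 0$. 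Thus the whole proposition reduces to showing that $\mathfrak{m}$ is invariant under every automorphism.

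To prove $\sigma(\mathfrak{m})=\mathfrak{m}$ I would characterise $\mathfrak{m}$ intrinsically among the maximal ideals with residue field $\mathbb{K}$. Such ideals are exactly the kernels of the $\mathbb{K}$-algebra homomorphisms $\varphi\colon R\to\mathbb{K}$, each given by an evaluation $Y_{\ia}\mapsto c_{\ia}$ of the generators, and $\mathfrak{m}$ is the one attached to the zero evaluation. The invariant I would use is the embedding dimension $\dim_{\mathbb{K}}M/M^{2}$, which is preserved by any algebra automorphism. At $\mathfrak{m}$ this equals $\dim_{\mathbb{K}}R_1=n^{2}$, the number of generators, because the defining relations are homogeneous quadratic and therefore have vanishing linear part at the origin.

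The heart of the argument, and the step I expect to require the real work, is to show that the value $n^{2}$ is attained only at $\mathfrak{m}$, i.e. that $\dim_{\mathbb{K}}M_c/M_c^{2}<n^{2}$ for every nonzero evaluation $c$. The plan here is a direct linearisation: if $c_{\ia}\neq 0$ for some position $(i,\alpha)$, then, choosing $\beta\neq\alpha$ in the same row, the relation $Y_{i,\beta}Y_{i,\alpha}=q^{-1}Y_{i,\alpha}Y_{i,\beta}$ has, after the substitution $Y_{j,\gamma}=c_{j,\gamma}+(Y_{j,\gamma}-c_{j,\gamma})$, the nonzero linear part $(1-q^{-1})\bigl(c_{i,\beta}(Y_{i,\alpha}-c_{i,\alpha})+c_{i,\alpha}(Y_{i,\beta}-c_{i,\beta})\bigr)$; since $q$ is not a root of unity the coefficient $(1-q^{-1})c_{i,\alpha}$ of $Y_{i,\beta}-c_{i,\beta}$ is nonzero. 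As the relation vanishes in $R$, this linear part lies in $M_c^{2}$, producing a nontrivial relation in $M_c/M_c^{2}$ and forcing $\dim_{\mathbb{K}}M_c/M_c^{2}\leq n^{2}-1$.

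With this in hand the proof concludes quickly: $\sigma^{-1}(\mathfrak{m})$ is again a maximal ideal with residue field $\mathbb{K}$ and with $\dim_{\mathbb{K}}\sigma^{-1}(\mathfrak{m})/\sigma^{-1}(\mathfrak{m})^{2}=\dim_{\mathbb{K}}\mathfrak{m}/\mathfrak{m}^{2}=n^{2}$, so by the uniqueness just established $\sigma^{-1}(\mathfrak{m})=\mathfrak{m}$, that is $\sigma(\mathfrak{m})=\mathfrak{m}$. I expect the only delicate bookkeeping to be the verification that the evaluation description really captures all maximal ideals with residue field $\mathbb{K}$ and that the embedding dimension is computed as the number of generators minus the rank of the linear parts of the relations; the strict drop at every nonzero point, although requiring a short case analysis over the four families of defining relations, is then uniform thanks to the hypothesis that $q$ is not a root of unity.
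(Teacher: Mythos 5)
Your argument is correct, but it cannot be matched line-by-line against the paper, because the paper does not prove this proposition at all: it is imported verbatim from \cite[Corollary 4.3]{ll}, where the invariance of the augmentation ideal is extracted from the $q$-commutation relations and the prior analysis of automorphisms in that reference. What you supply instead is a self-contained proof, and the route is genuinely different: you reduce everything to the single claim $\sigma(R_{\geq 1})=R_{\geq 1}$ (using $R_{\geq d}=(R_{\geq 1})^d$, valid since $R$ is connected graded and generated in degree one), and you isolate $R_{\geq 1}$ among the codimension-one maximal ideals $M_c=\ker\varphi_c$ by the automorphism-invariant $\dim_{\mathbb{K}}M_c/M_c^2$. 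The key computation checks out: $M_c/M_c^2$ is spanned by the classes of the $Y_{\ia}-c_{\ia}$ (since $rys\equiv\varphi_c(r)\varphi_c(s)y$ mod $M_c^2$), the value at $c=0$ is exactly $n^2$ because $R_1\cap R_{\geq 2}=0$, and if some $c_{\ia}\neq 0$ then the row relation $Y_{i,\beta}Y_{i,\alpha}=q^{\pm 1}Y_{i,\alpha}Y_{i,\beta}$ linearises to $(1-q^{\pm 1})c_{\ia}\,(Y_{i,\beta}-c_{i,\beta})\in M_c^2$ (the constant term vanishes because $\varphi_c$ kills $c_{i,\alpha}c_{i,\beta}$, and indeed forces $c_{i,\beta}=0$), which strictly drops the dimension since $q\neq 1$; note this step only needs $n\geq 2$ and one relation, not the full case analysis you anticipate. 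What your approach buys is generality and transparency: the only inputs are that $R$ is connected graded, generated in degree one with $\dim R_1=n^2$, and that the relations force every nonzero character to have embedding dimension $<n^2$; it makes no use of the PBW basis beyond linear independence of the generators, nor of the height-one prime analysis. The one point worth stating explicitly is that $x$ must be assumed nonzero (as is implicit in the statement) for the conclusion $y_d\neq 0$, since your final step deduces $\sigma(x)\notin R_{\geq d+1}$ from $x\notin R_{\geq d+1}$ via $\sigma(R_{\geq d+1})=R_{\geq d+1}$.
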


Note that the torus automorphisms of $R$ preserve degrees. We finish this section by recording the following result for later use.

\begin{lemma}
\label{toursreduction}
Let $\sigma \in \aut(R)$ such that there exist nonzero scalars $\lambda_{\ia}$ with 
$$\sigma(Y_{\ia})-\lambda_{\ia}Y_{\ia} \in R_{\geq 2} \mbox{ for all }(\ia). $$
Then there exists a torus automorphism $\sigma_h \in \mathcal{H}$ such that 
$$\sigma_h \circ \sigma(Y_{\ia})-Y_{\ia} \in R_{\geq 2} \mbox{ for all }(\ia). $$  
\end{lemma}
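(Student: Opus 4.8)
The plan is to read off the degree-one part of $\sigma$, impose that $\sigma$ respects the quadratic defining relations, and use the resulting constraints on the scalars $\lambda_{\ia}$ to build the required torus automorphism. Write $\sigma(Y_{\ia})=\lambda_{\ia}Y_{\ia}+z_{\ia}$ with $z_{\ia}\in R_{\geq 2}$. Since $\mathcal{H}$ is parametrised by only $2n-1$ scalars whereas there are $n^2$ scalars $\lambda_{\ia}$, the first task -- and the crux of the argument -- is to show that the family $(\lambda_{\ia})$ is constrained enough to be inverted by a single $\sigma_h$.

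To obtain these constraints I would apply $\sigma$ to the fourth family of defining relations, namely $Y_{j,\beta}Y_{\ia}=Y_{\ia}Y_{j,\beta}-(q-q^{-1})Y_{i,\beta}Y_{j,\alpha}$ for $i<j$ and $\alpha<\beta$, and extract the homogeneous component of degree $2$. Because each $z_{\ia}$ lies in $R_{\geq 2}$, any term in $\sigma(Y_{k,\gamma})\sigma(Y_{l,\delta})$ involving a factor $z$ has degree $\geq 3$, so the degree-$2$ part of $\sigma$ applied to a quadratic monomial $Y_{k,\gamma}Y_{l,\delta}$ is simply $\lambda_{k,\gamma}\lambda_{l,\delta}Y_{k,\gamma}Y_{l,\delta}$. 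Rewriting $Y_{j,\beta}Y_{\ia}$ in PBW form via the relation and using that the monomials $Y_{\ia}Y_{j,\beta}$ and $Y_{i,\beta}Y_{j,\alpha}$ with $i<j$ belong to the PBW basis of $R$ (hence are nonzero and linearly independent), the $Y_{\ia}Y_{j,\beta}$ terms cancel, and the coefficient of $Y_{i,\beta}Y_{j,\alpha}$ yields, since $q-q^{-1}\neq 0$ as $q$ is not a root of unity, the key identity $\lambda_{\ia}\lambda_{j,\beta}=\lambda_{i,\beta}\lambda_{j,\alpha}$ for all $i<j$, $\alpha<\beta$.

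Specialising this identity to $\beta=n$ gives $\lambda_{\ia}\lambda_{j,n}=\lambda_{i,n}\lambda_{j,\alpha}$, that is, the ratio $\lambda_{\ia}/\lambda_{i,n}$ does not depend on the row index $i$. This is exactly the compatibility needed to define a torus element: set $a_i:=\lambda_{i,n}^{-1}$ for $1\leq i\leq n$ and $b_\alpha:=\lambda_{\ia}^{-1}\lambda_{i,n}$ for $1\leq\alpha\leq n-1$ (well defined by the previous sentence, independently of $i$), and let $h:=(a_1,\dots,a_n,b_1,\dots,b_{n-1})\in(\mathbb{K}^*)^{2n-1}$. Checking against the formula for $\sigma_h$ recorded in the subsection on torus automorphisms gives $\sigma_h(Y_{\ia})=\lambda_{\ia}^{-1}Y_{\ia}$ in both cases $\alpha<n$ and $\alpha=n$.

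Finally, since $\sigma_h$ preserves the grading by construction, I would compute $\sigma_h\circ\sigma(Y_{\ia})=\lambda_{\ia}\sigma_h(Y_{\ia})+\sigma_h(z_{\ia})=Y_{\ia}+\sigma_h(z_{\ia})$, and as $z_{\ia}\in R_{\geq 2}$ forces $\sigma_h(z_{\ia})\in R_{\geq 2}$, this yields $\sigma_h\circ\sigma(Y_{\ia})-Y_{\ia}\in R_{\geq 2}$ for all $(\ia)$, as desired. The only genuine difficulty is the derivation of the scalar identity $\lambda_{\ia}\lambda_{j,\beta}=\lambda_{i,\beta}\lambda_{j,\alpha}$; once the ratio $\lambda_{\ia}/\lambda_{i,n}$ is seen to be row-independent, the construction of $h$ and the final verification are routine.
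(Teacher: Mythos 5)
Your proposal is correct and follows essentially the same route as the paper: apply $\sigma$ to the relations $Y_{j,\beta}Y_{\ia}=Y_{\ia}Y_{j,\beta}-(q-q^{-1})Y_{i,\beta}Y_{j,\alpha}$, compare degree-$2$ components to obtain $\lambda_{\ia}\lambda_{j,\beta}=\lambda_{i,\beta}\lambda_{j,\alpha}$, deduce that the matrix $(\lambda_{\ia})$ has rank one, and invert it with a torus automorphism. The only difference is that you spell out the rank-one factorisation explicitly (via the row-independence of $\lambda_{\ia}/\lambda_{i,n}$), which the paper leaves as an easy check.
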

\begin{proof} 
Assume $i<j$ and $ \alpha <\beta$. Applying $\sigma$ to the relation $Y_{j,\beta}Y_{i, \alpha}=Y_{i, \alpha} Y_{j,\beta}-(q-q^{-1})Y_{i,\beta}Y_{j,\alpha}$, and then identifying the degree 2 components, yields:
$$\lambda_{\ia}\lambda_{j,\beta}=\lambda_{i,\beta}\lambda_{j,\alpha}$$
for all $i<j$ and $ \alpha <\beta$. 
Hence, the matrix $(\lambda_{\ia})$ has rank one, so that there 
exist $a_1,\dots,a_n,b_1,\dots,b_{n-1},b_n=1 \in \mathbb{K}^*$ such that 
$$\lambda_{\ia}=a_ib_{\alpha}$$
for all $(\ia)$. 
Set $h=(a_1^{-1},\dots,a_n^{-1},b_1^{-1},\dots,b_{n-1}^{-1}) 
\in (\mathbb{K}^*)^{2n-1}$. 
Then one easily checks that the automorphism $\sigma_h \in \mathcal{H}$ 
has the property that
$\sigma_h \circ \sigma(Y_{\ia})-Y_{\ia} \in R_{\geq 2} 
\mbox{ for all }(\ia).$ 
\end{proof}

\subsection{Automorphism group of $\oqmn$: action on the centre}

Recall that the centre of $R=\oqmn$ is the polynomial ring
 $ \mathbb{K}[\Delta]$,
 where $\Delta$ denotes the quantum determinant of $R$. 
We now apply the results of the previous section to $R=\oqmn$ to prove that the quantum determinant 
$\Delta$ of $R$ is an eigenvector of every automorphism of $R$.

\begin{proposition}
\label{centreaction}
Let $\sigma$ be an automorphism of $R$. Then there exists $\mu \in  \mathbb{K}^*$ such that 
$\sigma(\Delta) =\mu \Delta$.
\end{proposition}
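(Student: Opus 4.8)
The plan is to exploit the fact that the centre $Z(R)=\mathbb{K}[\Delta]$ is a characteristic subalgebra of $R=\oqmn$, and then to pin down the precise form of $\sigma(\Delta)$ using the grading information provided by Proposition \ref{graduation2}.

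First I would observe that any automorphism $\sigma$ of $R$ restricts to an automorphism of the centre. Indeed, an element $z$ lies in $Z(R)$ if and only if it commutes with every element of $R$, and since $\sigma$ is bijective this property is preserved, so $\sigma(Z(R))=Z(R)$. As the centre is the polynomial algebra $\mathbb{K}[\Delta]$ (here we use that $q$ is not a root of unity, as recalled in the Introduction), $\sigma$ therefore induces a $\mathbb{K}$-algebra automorphism of $\mathbb{K}[\Delta]$.

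Next I would invoke the classical description of the automorphism group of a polynomial ring in one variable: every $\mathbb{K}$-algebra automorphism of $\mathbb{K}[\Delta]$ is affine, that is, of the form $\Delta\mapsto \mu\Delta+\nu$ for some $\mu\in\mathbb{K}^*$ and $\nu\in\mathbb{K}$. Applying this to the restriction of $\sigma$ yields $\sigma(\Delta)=\mu\Delta+\nu$ with $\mu\neq 0$. To eliminate the constant term $\nu$, I would use the grading. Since $\Delta$ is an $n\times n$ quantum minor it is homogeneous of degree $n$, and $n\geq 3>0$. By Proposition \ref{graduation2}, the lowest-degree homogeneous component of $\sigma(\Delta)$ sits in degree $n$; in particular $\sigma(\Delta)$ has no nonzero component in degree $0$. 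But the degree-$0$ component of $\mu\Delta+\nu$ is exactly $\nu$, so $\nu=0$ and hence $\sigma(\Delta)=\mu\Delta$, as required.

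The argument is short, and the only step requiring genuine care is the last one: one must ensure that the constant term is truly forbidden, which is precisely the content of the graded statement in Proposition \ref{graduation2} together with the positivity of $\deg\Delta$. I do not anticipate a serious obstacle, since all three ingredients — the centre being characteristic, the affine structure of automorphisms of $\mathbb{K}[\Delta]$, and the fact that $\Delta$ is homogeneous of positive degree — are already in place. (An alternative to the first two steps would run through normality: $\sigma$ sends the normal element $\Delta$ to a normal element and permutes the height one primes described in Proposition \ref{0stratum}, after which Lemma \ref{utile} would give $\sigma(\Delta)=\mu\Delta$; but the central route above is cleaner and more self-contained.)
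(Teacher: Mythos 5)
Your argument is correct and coincides with the paper's own proof: both pass to the induced automorphism of the centre $\mathbb{K}[\Delta]$, write $\sigma(\Delta)=\mu\Delta+\nu$, and kill the constant term using Proposition \ref{graduation2} and the fact that $\Delta$ is homogeneous of degree $n>0$. No changes needed.
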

\begin{proof}
Since $\sigma$ is an automorphism of $R$, it induces an automorphism of the centre $ \mathbb{K}[\Delta]$ of $R$. 
Hence there exist $\mu \in  \mathbb{K}^*$ and $\lambda \in  \mathbb{K}$
such that $\sigma( \Delta ) = \mu \Delta + \lambda$.
 Moreover, $\Delta$ is an homogeneous element of degree $n$ of
 $R=\oqmn$. Hence, Proposition \ref{graduation2} shows that we must have $\sigma( \Delta
 ) \in R_{\geq n}$. Naturally, this forces $\lambda$ to be zero.
\end{proof}

\subsection{Automorphism group of $\oqmn$: action on the normal
  element $b_1=Y_{1,n}$}

\begin{lemma}
\label{actiony1n}
Let $\sigma \in \aut(R)$. Then there exist $\epsilon \in \{0,1\}$,
$P,Q,P',Q' \in  \mathbb{K}[X]$ such that 
$$\tau^{\epsilon} \circ \sigma (Y_{1,n})=P(\Delta)b_{1} + Q(\Delta) b_{n+1} \mbox{ \ and \  } \sigma^{-1}\circ \tau^{\epsilon} (Y_{1,n})=P'(\Delta)b_{1} + Q'(\Delta) b_{n+1}
.$$
\end{lemma}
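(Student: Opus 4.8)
The plan is to combine the classification of height one primes (Proposition \ref{0stratum}) with the grading estimate of Proposition \ref{graduation2}. Since $Y_{1,n}=b_1$ is a normal element, the ideal $\ideal{Y_{1,n}}$ is a height one prime of $R$ (see \cite{ll}), so its image $\sigma(\ideal{Y_{1,n}})=\ideal{\sigma(Y_{1,n})}$ is again a height one prime. Proposition \ref{0stratum} therefore produces an irreducible polynomial $V=\sum a_{i_1,\dots,i_n}X_1^{i_1}\cdots X_n^{i_n}$ and a normal generator $u=\sum a_{i_1,\dots,i_n}\prod_{j=1}^n b_j^{i_j}b_{n+j}^{r_j-i_j}$ of that prime. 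As $\sigma(Y_{1,n})$ is itself normal and generates the same ideal, Lemma \ref{utile} gives $\sigma(Y_{1,n})=\lambda u$ for some $\lambda\in\mathbb{K}^*$.

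I would then exploit the grading. Recall that $\deg b_j=j$ for $1\le j\le n$, that $\deg b_{n+j}=n-j$ for $1\le j\le n-1$, that $b_{2n}=1$, and that $\Delta=b_n$. By Proposition \ref{graduation2} the element $\sigma(Y_{1,n})$, hence $u$, has a nonzero homogeneous component in degree $1$ and nothing in lower degree. Inspecting the degree $\sum_j\bigl(i_j\,j+(r_j-i_j)(n-j)\bigr)$ of a monomial $\prod_j b_j^{i_j}b_{n+j}^{r_j-i_j}$ shows, using $n\ge 3$, that this degree can equal $1$ only when the monomial is exactly $b_1$ (of degree $1$) or exactly $b_{2n-1}=Y_{n,1}$ (also of degree $1$), every other factor being forced to have exponent $0$. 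These two possibilities are mutually exclusive, since the first forces $\deg_{X_{n-1}}V=0$ while the second forces $\deg_{X_{n-1}}V\ge 1$. Consequently $V$ either involves only $X_1,X_n$ with $\deg_{X_1}V=1$, giving $u=P(\Delta)b_1+Q(\Delta)b_{n+1}$, or involves only $X_{n-1},X_n$ with $\deg_{X_{n-1}}V=1$, giving $u=\tilde P(\Delta)b_{n-1}+\tilde Q(\Delta)b_{2n-1}$.

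I set $\epsilon=0$ in the first case and $\epsilon=1$ in the second. In the case $\epsilon=1$, applying $\tau$ and using $\tau(b_{n-1})\in\mathbb{K}^* b_{n+1}$, $\tau(b_{2n-1})=b_1$, together with $\tau(\Delta)\in\mathbb{K}^*\Delta$ (Proposition \ref{centreaction} applied to $\tau$), brings the expression back into the shape $P(\Delta)b_1+Q(\Delta)b_{n+1}$. This establishes the first equation, and by construction the degree-$1$ component of $\rho(Y_{1,n})$ is a nonzero scalar multiple of $b_1$, where $\rho:=\tau^{\epsilon}\circ\sigma$.

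For the second equation I would pass to $\rho^{-1}=\sigma^{-1}\circ\tau^{\epsilon}$. Proposition \ref{graduation2} shows that the assignment sending $x\in R_1$ to the degree-$1$ component of $\psi(x)$ defines a linear endomorphism $\bar\psi$ of $R_1$ which is injective, hence bijective, and that $\psi\mapsto\bar\psi$ is a group homomorphism $\aut(R)\to\mathrm{GL}(R_1)$. Since $\bar\rho(Y_{1,n})$ is a nonzero multiple of $Y_{1,n}$, so is $\overline{\rho^{-1}}(Y_{1,n})=\bar\rho^{-1}(Y_{1,n})$; that is, the degree-$1$ part of $\rho^{-1}(Y_{1,n})$ is again a multiple of $b_1$ rather than of $b_{2n-1}$. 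Re-running the analysis of the second paragraph for the automorphism $\rho^{-1}$ then forces the corresponding value of $\epsilon$ to be $0$, which yields $\sigma^{-1}\tau^{\epsilon}(Y_{1,n})=P'(\Delta)b_1+Q'(\Delta)b_{n+1}$, as required. I expect the main obstacle to be the combinatorial degree computation pinning down $V$: one must verify carefully that for $n\ge 3$ no monomial of intermediate shape has degree $1$, and that the two admissible monomials $b_1$ and $b_{2n-1}$ cannot occur together.
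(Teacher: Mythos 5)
Your proof is correct and follows essentially the same route as the paper: height one primality of $\ideal{Y_{1,n}}$, Proposition \ref{0stratum} plus Lemma \ref{utile} to write $\sigma(Y_{1,n})=\lambda u$, the degree computation isolating $b_1$ and $b_{2n-1}$ as the only degree-one factors, and the action of $\tau$ on the $b_i$. The only (cosmetic) difference is in the final step: the paper applies $\sigma^{-1}\circ\tau^{\epsilon}$ directly to the identity $\tau^{\epsilon}\circ\sigma(Y_{1,n})=P(\Delta)b_1+Q(\Delta)b_{n+1}$ and compares degree-one parts, whereas you package the same observation as the statement that $\psi\mapsto\bar\psi$ is a group homomorphism $\aut(R)\to\mathrm{GL}(R_1)$, so that $\overline{\rho^{-1}}=\bar\rho^{-1}$ fixes the line $\mathbb{K}Y_{1,n}$.
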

\begin{proof} As $\ideal{b_1}=\ideal{Y_{1,n}}$ is a height one
  prime ideal of $R$, the ideal 
  $\ideal{\sigma(b_1)}$ must also be a height one
  prime of $R$. It follows from Proposition \ref{0stratum} that 
$\ideal{\sigma(Y_{1,n})} = \ideal{u} $
where 
$$
u:=\sum_{i_1=0}^{r_1} \dots
\sum_{i_n=0}^{r_n} a_{i_1,\dots,i_n} \prod_{j=1}^n b_{j}^{i_j}
b_{n+j}^{r_j-i_j}  
$$ 
is normal in $R$. Hence, we deduce from Lemma \ref{utile} that 
$$\sigma(Y_{1,n})= \lambda u = \sum_{i_1=0}^{r_1} \dots
\sum_{i_n=0}^{r_n} a'_{i_1,\dots,i_n} \prod_{j=1}^n b_{j}^{i_j}
b_{n+j}^{r_j-i_j},$$
where $\lambda \in  \mathbb{K}^*$ and $a'_{i_1,\dots,i_n}:=\lambda
a_{i_1,\dots,i_n}$. 

On the other hand, it follows from Proposition \ref{graduation2} that 
$\sigma(Y_{1,n})= u_1 +u_{\geq 2}$, with $u_1 \in R_1\setminus\{0\}$ and $u_{\geq 2}
\in R_{\geq 2}$. Since $b_i$ is homogeneous of degree $i$ if $i \leq
n$ , and $2n-i$ if $i \geq n$, comparing the two expresssions of 
$\sigma(Y_{1,n})$ that we have obtained leads to:\\

Either $\sigma (Y_{1,n})=P(\Delta)b_{1} + Q(\Delta) b_{n+1}$ 
or $\sigma (Y_{1,n})=P(\Delta)b_{n-1} + Q(\Delta) b_{2n-1}$. \\

\noindent Now, the existence of $\epsilon$ such that $\tau^{\epsilon} \circ
\sigma (Y_{1,n})=P(\Delta)b_{1} + Q(\Delta) b_{n+1}$ easily follows from the
fact that $\tau(\Delta)=\Delta$, and $\tau(b_i)=b_{2n-i}$ for all $i$. 

Note that the previous reasoning applies also to $\sigma^{-1}\circ
\tau^{\epsilon}$, so that $\sigma^{-1}\circ \tau^{\epsilon}
(Y_{1,n})=P'(\Delta)b_{1} + Q'(\Delta) b_{n+1}$ or $\sigma^{-1}\circ
\tau^{\epsilon}(Y_{1,n})=P'(\Delta)b_{n-1} + Q'(\Delta) b_{2n-1}$. Recall, 
from
Proposition \ref{centreaction}, 
that there exists $\mu \in \mathbb{K}^*$ such
that $\sigma^{-1}\circ \tau^{\epsilon}(\Delta)=\mu \Delta$, so that applying
$\sigma^{-1}\circ \tau^{\epsilon}$ to $\tau^{\epsilon} \circ \sigma
(Y_{1,n})=P(\Delta)b_{1} + Q(\Delta) b_{n+1}$ leads to $$Y_{1,n}=P(\mu \Delta)
\sigma^{-1}\circ \tau^{\epsilon} (Y_{1,n}) +Q(\mu \Delta)\sigma^{-1}\circ
\tau^{\epsilon}(b_{n+1}).$$ Comparing the degree one part of each side using
Proposition \ref{graduation2}, this easily implies that the case
$\sigma^{-1}\circ \tau^{\epsilon}(Y_{1,n})=P'(\Delta)b_{n-1} + Q'(\Delta)
b_{2n-1}$ is impossible, so that $\sigma^{-1}\circ \tau^{\epsilon}
(Y_{1,n})=P'(\Delta)b_{1} + Q'(\Delta) b_{n+1}$, as desired.
\end{proof}

\subsection{Automorphism group of $\oqmn$: reduction step.}

In view of Lemma \ref{actiony1n}, it is natural to introduce $$G':=\{ \sigma
\in \aut(R) \mid \sigma(Y_{1,n})=P(\Delta)b_{1} + Q(\Delta) b_{n+1}\}.$$ Note
that the proof of the previous lemma shows that $G'$ is invariant under taking inverses.

\begin{lemma}
\label{actionJ}
Set $J_r:=Y_{1,1}R+Y_{1,2}R+\dots+Y_{1,n-1}R$ and 
$J_c:=Y_{2,n}R+Y_{3,n}R+\dots+Y_{n,n}R$. If $\sigma \in G'$, 
then $\sigma(J_r)=J_r$ and $\sigma(J_c)=J_c$.
\end{lemma}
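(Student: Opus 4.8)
The plan is to prove the two inclusions $\sigma(J_r)\subseteq J_r$ and $\sigma(J_c)\subseteq J_c$; since the remark preceding the lemma shows $G'$ is stable under inversion, applying these to $\sigma^{-1}\in G'$ yields the reverse inclusions, hence equality. As $J_r=\sum_{\alpha<n}Y_{1,\alpha}R$ and $J_c=\sum_{i>1}Y_{i,n}R$, it suffices to prove $\sigma(Y_{1,\alpha})\in J_r$ for $\alpha<n$ and $\sigma(Y_{i,n})\in J_c$ for $i>1$. Write $v:=\sigma(Y_{1,n})=P(\Delta)b_1+Q(\Delta)b_{n+1}$. Since $Y_{1,n}=b_1$ is normal, it defines an automorphism $\theta$ of $R$ by $Y_{1,n}r=\theta(r)Y_{1,n}$; a direct check on the defining relations gives $\theta(Y_{1,\alpha})=q^{-1}Y_{1,\alpha}$ for $\alpha<n$, $\theta(Y_{i,n})=qY_{i,n}$ for $i>1$, while $\theta$ fixes $Y_{1,n}$ and every $Y_{\ia}$ with $i>1$, $\alpha<n$. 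As $\theta$ is diagonal on the generators and fixes $\Delta$, one gets a decomposition $R=\bigoplus_{k\in\mz}W_k$ into $\theta$-eigenspaces $W_k=\{x\mid\theta(x)=q^kx\}$, compatible with the $\N$-grading. Applying $\sigma$ to the $q$-commutation relations of $Y_{1,n}$ with the generators shows that each $\sigma(Y_{\ia})$ is an eigenvector for conjugation by $v$, of the same eigenvalue as $Y_{\ia}$ is for $\theta$; in particular $\sigma(Y_{1,\alpha})$ lies in the $q^{-1}$-eigenspace and $\sigma(Y_{i,n})$ in the $q$-eigenspace of conjugation by $v$.

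The heart of the argument is to show that conjugation by $v$ coincides with $\theta$. A computation analogous to the one for $b_1$ shows that conjugation by the complementary minor $b_{n+1}=[2,\dots,n\mid1,\dots,n-1]$ is exactly $\theta^{-1}$. Thus, for $x\in R$ with $vx=q^{-1}xv$, I would decompose $x=\sum_k x_k$ along the $W_k$ and use $b_1x_k=q^kx_kb_1$, $b_{n+1}x_k=q^{-k}x_kb_{n+1}$ together with the centrality of $\Delta$. The equation $vx=q^{-1}xv$ then separates into $\sum_k P(\Delta)(q^k-q^{-1})x_k=0$ and $\sum_k Q(\Delta)(q^{-k}-q^{-1})x_k=0$. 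The first forces $x_k=0$ for $k\neq-1$, while if $Q\neq0$ the second forces $x_k=0$ for $k\neq1$; since $\sigma(Y_{1,1})\neq0$ is such an $x$, this yields $Q=0$. Hence $v=P(\Delta)Y_{1,n}$, conjugation by $v$ equals $\theta$, and so $\sigma(Y_{1,\alpha})\in W_{-1}$ and $\sigma(Y_{i,n})\in W_{+1}$. I expect the main obstacle to be justifying the clean separation above, namely that the $b_1$- and $b_{n+1}$-homogeneous parts are independent; this is where I would invoke that $\ideal{Y_{1,n}}$ is a height-one prime, that $b_{n+1}\notin\ideal{Y_{1,n}}$, and the fact that the two parts live in different degrees modulo $n$.

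It then remains to prove $W_{-1}\subseteq J_r$ and $W_{+1}\subseteq J_c$, for which I would use the PBW basis of ordered monomials in the $Y_{\ia}$. The $\theta$-weight of a monomial equals the number of factors $Y_{i,n}$ with $i>1$ minus the number of factors $Y_{1,\alpha}$ with $\alpha<n$, and it is preserved by the straightening relations; hence $W_{-1}$ is spanned by ordered monomials of weight $-1$. Each such monomial contains a factor $Y_{1,\alpha}$ with $\alpha<n$, and since these are the lexicographically smallest generators, that factor occurs at the left of the ordered monomial, placing it in $Y_{1,\alpha}R\subseteq J_r$; thus $W_{-1}\subseteq J_r$. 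The inclusion $W_{+1}\subseteq J_c$ follows by the symmetric argument, either via the anti-diagonal symmetry $Y_{\ia}\mapsto Y_{n+1-\alpha,n+1-i}$ of the defining relations (which interchanges $J_r$ and $J_c$) or by an analogous straightening computation that is slightly more delicate, the correction terms in the last defining relation having a preferred orientation. Combining the three steps gives $\sigma(Y_{1,\alpha})\in J_r$ and $\sigma(Y_{i,n})\in J_c$, and the inversion trick completes the proof.
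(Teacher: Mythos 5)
Your overall strategy is the paper's: prove $\sigma(J_r)\subseteq J_r$, use the inverse-stability of $G'$ for the reverse inclusion, extract the weight of $\sigma(Y_{1,\alpha})$ under conjugation by $\sigma(Y_{1,n})$ from the $q$-commutation relation, and read off membership in $J_r$ from the PBW expansion. But there is a concrete error in the middle step: conjugation by $b_{n+1}=[2,\dots,n\mid 1,\dots,n-1]$ is \emph{not} $\theta^{-1}$; it is $\theta$ itself. This is precisely the content of the fact, quoted in the paper from \cite[Theorem 3.4]{ll}, that $b_{n+1}b_1^{-1}$ is central in $\fract(R)$: writing $b_{n+1}=zb_1$ with $z$ central shows that $b_1$ and $b_{n+1}$ induce the \emph{same} inner automorphism (you can check this directly on the generators for $n=2$ or $3$). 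With the wrong sign, your second separated equation $\sum_k Q(\Delta)(q^{-k}-q^{-1})x_k=0$ is spurious, and the conclusion $Q=0$ does not follow --- nor is it available at this stage of the argument: at this point $Q$ could a priori be nonzero, and $Q=0$ only comes out of the final theorem.

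Fortunately the error is self-healing once corrected. With $b_1x_k=q^kx_kb_1$ \emph{and} $b_{n+1}x_k=q^kx_kb_{n+1}$, the relation $vx=q^{-1}xv$ collects into $\bigl(\sum_k(q^k-q^{-1})x_k\bigr)v=0$, and since $R$ is a domain and $v\neq0$ this gives $x\in W_{-1}$ outright --- which is all you need, and which is exactly how the paper proceeds (it cancels the central factor $P(\Delta)+Q(\Delta)b_{n+1}b_1^{-1}$ to reduce to commutation with $Y_{1,n}$). In particular the ``separation'' of the $b_1$- and $b_{n+1}$-parts that you flag as the main obstacle is both unnecessary and, as stated, not justifiable: homogeneous components of $P(\Delta)x_kb_1$ and of $Q(\Delta)x_{k'}b_{n+1}$ can occupy the same total degree, so the congruence-mod-$n$ argument does not split the identity into two. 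If you did want to rely on your first equation alone, you would also need to note that $P\neq0$, which follows from Proposition \ref{graduation2} because $b_{n+1}$ has degree $n-1\geq2$. Your final step ($W_{-1}\subseteq J_r$ via the PBW basis) is the paper's argument and is fine; for $J_c$ your acknowledgement that the straightening is more delicate matches the paper, which likewise only asserts that the proof is ``similar''.
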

\begin{proof} The proof is given for the case $J:=J_r$; the proof for $J_c$ 
is similar. 

Let $\beta \in \gc 1,n-1 \dc$ and write $\sigma(Y_{1,\beta})$ in the
PBW basis of $R$:
$$
\sigma(Y_{1,\beta})= \sum_{\underline{\gamma} \in \Gamma}
c_{\underline{\gamma}} Y_{1,1}^{\gamma_{1,1}} Y_{1,2}^{\gamma_{1,2}}
\dots Y_{n,n}^{\gamma_{n,n}},
$$
where $\Gamma$ is a finite subset of $\mathbb{N}^{n^2}$ and each 
$c_{\underline{\gamma}}\neq 0$.  
Recall that $Y_{1,n}Y_{1,\beta}=q^{-1}Y_{1,\beta}Y_{1,n}$. Hence,
applying $\sigma$ to this equality leads to 
\begin{eqnarray*}
\lefteqn{
\left( P(\Delta)Y_{1,n} + Q(\Delta)[2,\dots,n  \mid
  1,\dots,n-1]\right) \left( \sum_{\underline{\gamma} \in \Gamma}
c_{\underline{\gamma}} Y_{1,1}^{\gamma_{1,1}} Y_{1,2}^{\gamma_{1,2}}
\dots Y_{n,n}^{\gamma_{n,n}} \right)=}  \\
&&
q^{-1} \left( \sum_{\underline{\gamma} \in \Gamma}
c_{\underline{\gamma}} Y_{1,1}^{\gamma_{1,1}} Y_{1,2}^{\gamma_{1,2}}
\dots Y_{n,n}^{\gamma_{n,n}} \right) \left( P(\Delta)Y_{1,n} +
Q(\Delta)[2,\dots,n  \mid 1,\dots,n-1]\right).
\end{eqnarray*} 
Now, since $\Delta$ is central in $R$, and $[2,\dots,n  \mid
  1,\dots,n-1]Y_{1,n}^{-1}=b_{n+1}b_1^{-1}$ is central in the field of
fractions of $R$, see \cite[Theorem 3.4]{ll}, we obtain 
 $$
 Y_{1,n}\left( \sum_{\underline{\gamma} \in \Gamma}
c_{\underline{\gamma}} Y_{1,1}^{\gamma_{1,1}} Y_{1,2}^{\gamma_{1,2}}
\dots Y_{n,n}^{\gamma_{n,n}} \right)
=q^{-1} \left( \sum_{\underline{\gamma} \in \Gamma}
c_{\underline{\gamma}} Y_{1,1}^{\gamma_{1,1}} Y_{1,2}^{\gamma_{1,2}}
\dots Y_{n,n}^{\gamma_{n,n}} \right) Y_{1,n}; 
$$ 
that is, 
\begin{eqnarray*}
\lefteqn{
\left( \sum_{\underline{\gamma} \in \Gamma} q^{-\gamma_{1,1}-\dots
  -\gamma_{1,n-1}+\gamma_{2,n}+\dots + \gamma_{n,n}}
c_{\underline{\gamma}} Y_{1,1}^{\gamma_{1,1}} Y_{1,2}^{\gamma_{1,2}}
\dots Y_{n,n}^{\gamma_{n,n}} \right) Y_{1,n}=}\hspace*{30ex}\\
&&
q^{-1} \left( \sum_{\underline{\gamma} \in \Gamma}
c_{\underline{\gamma}} Y_{1,1}^{\gamma_{1,1}} Y_{1,2}^{\gamma_{1,2}}
\dots Y_{n,n}^{\gamma_{n,n}} \right) Y_{1,n}.
\end{eqnarray*} 
As $R$ is a domain, this implies that 
$$ \sum_{\underline{\gamma} \in \Gamma} q^{-\gamma_{1,1}-\dots
  -\gamma_{1,n-1}+\gamma_{2,n}+\dots + \gamma_{n,n}}
c_{\underline{\gamma}} Y_{1,1}^{\gamma_{1,1}} Y_{1,2}^{\gamma_{1,2}}
\dots Y_{n,n}^{\gamma_{n,n}} =q^{-1}  \sum_{\underline{\gamma} \in \Gamma}
c_{\underline{\gamma}} Y_{1,1}^{\gamma_{1,1}} Y_{1,2}^{\gamma_{1,2}}
\dots Y_{n,n}^{\gamma_{n,n}} . $$
Identifying these two expressions in the PBW basis, and then using the fact that $q$ is
not a root of unity leads to  
$$-\gamma_{1,1}-\dots
  -\gamma_{1,n-1}+\gamma_{2,n}+\dots + \gamma_{n,n}=-1$$
for all $\underline{\gamma} \in \Gamma$. In particular, for all
$\underline{\gamma} \in \Gamma$, there exists $\beta_0 \in \{ 1, \dots ,n-1
\}$ such that $\gamma_{1,\beta_0} \geq 1 $. Hence 
$\sigma(Y_{1,\beta}) $ belongs to $J$, and so $\sigma(J)\subseteq J$. 

One can also apply this argument to $\sigma^{-1}$, so that we also
have  $\sigma^{-1}(J) \subseteq J$. From these two inclusions, we
conclude that  $\sigma(J)= J$.
\end{proof}

\begin{corollary}
\label{actionK}
Set $K_r:=\ideal{Y_{1,1},Y_{1,2},...,Y_{1,n}}=Y_{1,1}R+Y_{1,2}R+\dots+Y_{1,n}R$ and $K_c:=\ideal{Y_{1,n},Y_{2,n},...,Y_{n,n}}=Y_{1,n}R+Y_{2,n}R+\dots+Y_{n,n}R$. If $\sigma \in G'$, then $\sigma(K_r)=K_r$ and $\sigma(K_c)=K_c$.
\end{corollary}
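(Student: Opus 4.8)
The plan is to collapse everything to a single scalar condition on $\sigma(Y_{1,n})$ and then exploit the grading. First I would record that, as right ideals, $K_r=J_r+Y_{1,n}R$ and $K_c=J_c+Y_{1,n}R$. Since $\sigma(R)=R$ and, by Lemma \ref{actionJ}, $\sigma(J_r)=J_r$ and $\sigma(J_c)=J_c$, this gives $\sigma(K_r)=J_r+\sigma(Y_{1,n})R$ and $\sigma(K_c)=J_c+\sigma(Y_{1,n})R$. Hence $\sigma(K_r)\subseteq K_r$ if and only if $\sigma(Y_{1,n})\in K_r$, and likewise for $K_c$. As $G'$ is invariant under taking inverses, the same inclusion applied to $\sigma^{-1}$ yields $\sigma^{-1}(K_r)\subseteq K_r$, that is $K_r\subseteq\sigma(K_r)$; so it suffices to prove the two memberships $\sigma(Y_{1,n})\in K_r$ and $\sigma(Y_{1,n})\in K_c$.

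Next I would rewrite these as one scalar condition. Since $\sigma\in G'$ we have $\sigma(Y_{1,n})=P(\Delta)Y_{1,n}+Q(\Delta)b_{n+1}$. Expanding $\Delta=[1,\dots,n\mid 1,\dots,n]$ along its first row (resp.\ its last column) shows $\Delta\in K_r$ (resp.\ $\Delta\in K_c$), while trivially $Y_{1,n}\in K_r\cap K_c$. As $\Delta$ is central, $P(\Delta)Y_{1,n}=Y_{1,n}P(\Delta)\in K_r\cap K_c$; writing $Q(X)=Q(0)+X\widetilde Q(X)$ and using that $\Delta b_{n+1}\in K_r\cap K_c$ (right-ideal property together with centrality of $\Delta$) gives $\sigma(Y_{1,n})\equiv Q(0)\,b_{n+1}$ modulo $K_r$ and modulo $K_c$. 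Since $R/K_r\cong\co_q(M_{n-1,n})$ is a domain and $b_{n+1}=[2,\dots,n\mid 1,\dots,n-1]$ descends to a nonzero maximal minor there (and symmetrically for $K_c$), both memberships hold if and only if $Q(0)=0$.

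To prove $Q(0)=0$ I would use the quantum Laplace expansion of $\Delta$ along the first row, which has the shape $\Delta=c\,Y_{1,n}b_{n+1}+w$ with $c\in\mathbb{K}^*$ and $w\in J_r$, every other cofactor term carrying a left factor $Y_{1,\alpha}$ with $\alpha<n$. Applying $\sigma$, using $\sigma(\Delta)=\mu\Delta$ (Proposition \ref{centreaction}) and $\sigma(w)\in\sigma(J_r)=J_r\subseteq K_r$, and then reducing modulo $K_r$ in the domain $R/K_r$, I obtain $0=c\,\overline{\sigma(Y_{1,n})}\,\overline{\sigma(b_{n+1})}=c\,Q(0)\,\overline{b_{n+1}}\,\overline{\sigma(b_{n+1})}$. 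Because $R/K_r$ is a domain with $\overline{b_{n+1}}\neq 0$ and $c\neq 0$, this forces $Q(0)=0$ as soon as $\overline{\sigma(b_{n+1})}\neq 0$, i.e.\ as soon as $\sigma(b_{n+1})\notin K_r$.

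The one genuinely non-formal point — and the step I expect to be the main obstacle — is exactly this last claim, $\sigma(b_{n+1})\notin K_r$. I would settle it with the associated-graded automorphism furnished by Proposition \ref{graduation2}: sending each homogeneous $x$ to the lowest-degree component $y_{\deg x}$ of $\sigma(x)$ defines a graded algebra automorphism $\widehat\sigma$ of $R$, where well-definedness and multiplicativity use that $R$ is a domain, so lowest-degree parts multiply without cancellation. By Lemma \ref{actionJ} the degree-one part of $\sigma(Y_{1,\beta})$ lies in $J_r\cap R_1=\mathrm{span}(Y_{1,1},\dots,Y_{1,n-1})$ for $\beta<n$, while $\widehat\sigma(Y_{1,n})=P(0)Y_{1,n}$; hence $\widehat\sigma$ maps $K_r\cap R_1=\mathrm{span}(Y_{1,1},\dots,Y_{1,n})$ into itself, and since $K_r$ is the graded ideal generated by $K_r\cap R_1$ it follows that $\widehat\sigma(K_r)=K_r$. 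As $b_{n+1}\notin K_r$ and $\widehat\sigma$ therefore induces an automorphism of the domain $R/K_r$, the lowest-degree part $\widehat\sigma(b_{n+1})$ of $\sigma(b_{n+1})$ is not in $K_r$; because $K_r$ is homogeneous, $\sigma(b_{n+1})\notin K_r$, as required. With $Q(0)=0$ in hand we get both $\sigma(Y_{1,n})\in K_r$ and $\sigma(Y_{1,n})\in K_c$, and the inverse-stability of $G'$ upgrades the resulting inclusions to the equalities $\sigma(K_r)=K_r$ and $\sigma(K_c)=K_c$.
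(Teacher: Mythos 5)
Your proof is correct, but it takes a genuinely different route from the paper's. The paper argues purely ideal-theoretically: since $J_r=\sigma(J_r)\subseteq\sigma(K_r)$ and $\sigma(K_r)$ is a completely prime ideal of height $n$, the identity $(q-q^{-1})Y_{1,n}Y_{i,1}=Y_{1,1}Y_{i,n}-Y_{i,n}Y_{1,1}\in\sigma(K_r)$ forces either $Y_{1,n}\in\sigma(K_r)$ or $Y_{i,1}\in\sigma(K_r)$ for all $i$; the second alternative is excluded because $\sigma(K_r)$ would then strictly contain the height-$n$ prime generated by the first column, and the inclusion $K_r\subseteq\sigma(K_r)$ is upgraded to equality by comparing heights rather than by inverse-stability of $G'$. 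You instead exploit the explicit $G'$-form $\sigma(Y_{1,n})=P(\Delta)Y_{1,n}+Q(\Delta)b_{n+1}$ to reduce everything to the single scalar condition $Q(0)=0$, which you extract from a quantum Laplace expansion of $\Delta$ read modulo $K_r$, using the leading-term (associated graded) automorphism $\widehat\sigma$ built from Proposition \ref{graduation2} and the domain property to guarantee $\sigma(b_{n+1})\notin K_r$. Each step of yours checks out: $J_r$ and $K_r$ are graded right ideals, so degree-one components of elements of $J_r$ lie in $\mathrm{span}(Y_{1,1},\dots,Y_{1,n-1})$; $\widehat\sigma$ is well defined and bijective because $\widehat\sigma|_{R_d}=\pi_d\circ\sigma|_{R_d}$ is injective on each finite-dimensional graded piece; and $\overline{b_{n+1}}\neq 0$ in the domain $R/K_r\cong\co_q(M_{n-1,n})$. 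The trade-off: the paper's proof is shorter and needs only Lemma \ref{actionJ} plus the height and complete primality of $K_r$ and of the first-column ideal, while yours avoids the height comparisons at the cost of the Laplace identity and the graded machinery, and yields the extra piece of information that the polynomial $Q$ has zero constant term.
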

\begin{proof} Again, we only consider the case of $K=K_r$. 

As $J=J_r \subset K$, Lemma \ref{actionJ} shows that $J \subset \sigma(K)$.

On the other hand, $K$ is a height $n$ prime ideal of $R$, so that 
 $\sigma(K)$ is also a height $n$ prime ideal. Moreover, since  $J \subset
\sigma(K)$, $Y_{1,1},Y_{1,2},...,Y_{1,n-1}$ belong to $\sigma(K)$. 
Now, $(q-q^{-1})Y_{1,n}Y_{i,1}=Y_{1,1}Y_{i,n}-Y_{i,n}Y_{1,1}
 \in \sigma(K)$ for all $i \in \gc 2,n \dc$. As $\sigma(K)$ is
 (completely) prime, this leads to: 
 either $Y_{1,n} \in \sigma(K)$ or $Y_{i,1} \in \sigma(K)$, for all 
 $i \in \gc 2,n \dc$.

We claim that the second possibility cannot happen. If it did then 
$\sigma(K)$ would strictly contain the ideal generated by the 
  $Y_{i,1}$, for $i \in \gc 1,n \dc$. However, this ideal is prime and has
height $n$, the same height as $\sigma(K)$. This is impossible.

Hence, $Y_{1,n} \in \sigma(K)$. As we already know that 
 $Y_{1,1},Y_{1,2},...,Y_{1,n-1}$ belong to $\sigma(K)$, 
 we obtain that 
$K \subseteq \sigma(K)$. Now these two ideals are prime and each has 
height $n$, so that they are equal; that is, 
 $\sigma(K)=K.$
\end{proof}

\begin{proposition}
\label{reduction}
Let $G$ be the subgroup of $\aut(R)$ generated by $\tau$ and the torus automorphisms. 
Let $\sigma \in \aut(R)$. Then there exists $g \in G$ such that, for
all $(\ia) \in \gc 1,n \dc^2$, we have 
$$g \circ \sigma (Y_{\ia}) - Y_{\ia} \in R_{\geq 2}.$$
\end{proposition}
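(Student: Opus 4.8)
```latex
\begin{proof}[Proof plan]
The plan is to exploit the fact, established in Corollary \ref{actionK}, that
any $\sigma \in G'$ stabilises both the ``first row'' ideal $K_r$ and the
``last column'' ideal $K_c$, and to combine this with the graded rigidity
supplied by Proposition \ref{graduation2} and the reduction to torus
automorphisms in Lemma \ref{toursreduction}. First I would reduce to the case
$\sigma \in G'$: by Lemma \ref{actiony1n} there is an $\epsilon \in \{0,1\}$
such that $\tau^{\epsilon} \circ \sigma \in G'$, and since $\tau \in G$ it
suffices to prove the statement for elements of $G'$. So from now on assume
$\sigma \in G'$, and write $\sigma' := \tau^{\epsilon} \circ \sigma$ if the
reduction was needed, renaming $\sigma'$ as $\sigma$.

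Next I would extract the degree-one behaviour of $\sigma$ on the generators.
By Proposition \ref{graduation2}, for each $(\ia)$ we have
$\sigma(Y_{\ia}) = L_{\ia} + (\text{terms in } R_{\geq 2})$ with
$L_{\ia} \in R_1 \setminus \{0\}$; thus $L_{\ia}$ is a nonzero linear
combination of the generators $Y_{k,\gamma}$. The goal is to show that each
$L_{\ia}$ is in fact a nonzero scalar multiple of $Y_{\ia}$ itself, for then
Lemma \ref{toursreduction} produces a torus automorphism $\sigma_h \in
\mathcal{H} \subseteq G$ with $\sigma_h \circ \sigma(Y_{\ia}) - Y_{\ia} \in
R_{\geq 2}$, and taking $g := \sigma_h \tau^{\epsilon}$ finishes the proof.
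The key structural input is Corollary \ref{actionK}: since $\sigma(K_r) = K_r$,
reducing modulo $R_{\geq 2}$ and modulo $K_r$ forces the degree-one part
$L_{1,\beta}$ of $\sigma(Y_{1,\beta})$ to lie in the span of the first-row
generators $Y_{1,1}, \dots, Y_{1,n}$; likewise $\sigma(K_c) = K_c$ forces
$L_{i,n}$ into the span of the last-column generators $Y_{1,n},\dots,Y_{n,n}$.

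To pin the linear parts down to single generators I would use the
$q$-commutation relations, propagating the argument already used in Lemma
\ref{actionJ}. The idea is that the generators $Y_{\ia}$ are, up to the known
quantum-minor corrections, determined inside $R_1$ by their $q$-commutation
behaviour against the normal element $Y_{1,n}=b_1$ and the central element
$\Delta$; identifying the degree-two components of the images of the defining
relations (exactly as in the proof of Lemma \ref{toursreduction}) should force
the coefficient matrix of the $L_{\ia}$ to be compatible with a single
rescaling. Concretely, once the first row and last column are controlled, one
applies $\sigma$ to the relations $Y_{i,n}Y_{1,n}=q^{-1}Y_{1,n}Y_{i,n}$ and
$Y_{1,n}Y_{1,\beta}=q^{-1}Y_{1,\beta}Y_{1,n}$ and reads off, by comparing
degree-two parts, that the surviving linear coefficient is the diagonal one.
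An induction then sweeps from the boundary generators inward, using that each
$Y_{\ia}$ $q$-commutes in a prescribed way with the already-controlled
generators.

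The main obstacle I expect is the inductive step that moves from the boundary
(first row and last column) to the interior generators $Y_{\ia}$ with
$i > 1$ and $\alpha < n$. Unlike $K_r$ and $K_c$, there is no single prime
ideal handed to us by Corollary \ref{actionK} that isolates an arbitrary
generator, so the degree-one part $L_{\ia}$ must be extracted purely from
$q$-commutation constraints against the boundary generators whose images are
already known. The delicate point is ensuring that these commutation relations,
after passing to degree-two components, rigidly determine $L_{\ia}$ rather than
merely constraining it to some subspace; I anticipate that the argument will
require simultaneously tracking which generators can appear in $L_{\ia}$ and
using that $q$ is not a root of unity to force a unique surviving term, much
as the non-root-of-unity hypothesis was used decisively in Lemma
\ref{actionJ}.
\end{proof}
```
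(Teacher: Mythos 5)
Your overall skeleton matches the paper's: reduce to $\sigma\in G'$ via Lemma \ref{actiony1n}, use Corollary \ref{actionK} to constrain the linear parts, clean up residual coefficients with $q$-commutation against $b_1$ and the centrality of $\Delta$ and $b_{n+1}b_1^{-1}$, and finish with Lemma \ref{toursreduction}. However, there is a genuine gap at exactly the point you flag as your ``main obstacle'': you have no mechanism for the interior generators, and the information you extract from Corollary \ref{actionK} is strictly weaker than what the paper uses. The paper does not merely observe that $\sigma(K_r)=K_r$ forces the linear part of $\sigma(Y_{1,\beta})$ into the span of the first row; the decisive step is that $\sigma$ induces an automorphism of the quotient $R/K_r\cong \co_q(M_{n-1,n})$, which is a \emph{non-square} quantum matrix algebra, and by the authors' earlier theorem (\cite{ll}) the automorphism group of $\co_q(M_{m,n})$ with $m\neq n$ is exactly the torus $\mathcal{H}$. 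This immediately gives $\sigma(Y_{\ia})-\lambda_{\ia}Y_{\ia}\in K_r$ for all $i\geq 2$, i.e.\ it controls the linear part of every generator outside the first row up to an error in the span of $Y_{1,1},\dots,Y_{1,n}$; the analogous statement for $K_c$ controls every generator outside the last column up to an error in the span of $Y_{1,n},\dots,Y_{n,n}$. Intersecting the two leaves only $\lambda_{\ia}Y_{\ia}+\mu_{\ia}Y_{1,n}$ for interior $(\ia)$, and a single degree-two comparison of $\sigma$ applied to $Y_{\ia}Y_{i,\alpha+1}=qY_{i,\alpha+1}Y_{\ia}$ kills $\mu_{\ia}$.

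Without invoking this quotient-to-the-non-square-case argument, your plan must determine the full $n^2\times n^2$ linear part of $\sigma$ from the degree-two components of the defining relations alone, i.e.\ essentially compute the graded automorphism group of $\oqmn$ from scratch. You do not carry this out, and you explicitly concede that you do not see how to make the commutation constraints rigid enough to isolate $L_{\ia}$ for $i>1$, $\alpha<n$. That is the heart of the proposition, so as written the proposal is a plan with its central step missing rather than a proof. (A secondary, fixable imprecision: even for the first row your constraint ``$L_{1,\beta}$ lies in the span of $Y_{1,1},\dots,Y_{1,n}$'' still leaves $n$ undetermined coefficients per generator, whereas the paper's quotient argument applied to $K_c$ reduces this to $\lambda'_{1,\beta}Y_{1,\beta}$ plus a last-column error before the commutation step is ever needed.)
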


\begin{proof} In view of Lemma \ref{toursreduction}, 
it is enough to prove that 
there exist $g\in G$ and 
nonzero scalars $\lambda_{\ia}$ with 
$$
g\circ\sigma(Y_{\ia})-\lambda_{\ia}Y_{\ia} 
\in R_{\geq 2} \mbox{ for all }(\ia). 
$$

First,  it follows from Lemma \ref{actiony1n} that there exist  $g' \in G$,
and $P,Q \in  \mathbb{K}[X]$ such that 
$$g' \circ \sigma (Y_{1,n})=P(\Delta)b_{1} + Q(\Delta) b_{n+1}
=P(\Delta)Y_{1,n} + Q(\Delta)[2,\dots,n  \mid 1,\dots,n-1].$$
Hence, it is enough to prove Proposition \ref{reduction} when $\sigma$ is an
automorphism of $R$ such that 
$$\sigma (Y_{1,n}) =P(\Delta)Y_{1,n} + Q(\Delta)[2,\dots,n  \mid
  1,\dots,n-1];$$
that is, when $\sigma \in G'$. 

So, let $\sigma \in G'$. It follows from Corollary \ref{actionK} that 
$\sigma(K_r)=K_r$. Hence, $\sigma$ induces an automorphism of 
$R/K_r$ an algebra 
that is isomorphic to $ \co_q(M_{n-1,n})$ via 
an isomorphism that sends 
$Y_{\ia}+K_r$ to $y_{i-1,\alpha}$, where $y_{\ia}$ denote the canonical 
generators of $ \co_q(M_{n-1,n})$. Hence, it follows from \cite{ll} that 
there exist $\lambda_{\ia} \in  \mathbb{K}^*$ such that
$$
\sigma(Y_{\ia})- \lambda_{\ia} Y_{\ia} \in K_r,
$$  
for all $(\ia) \in \gc 2,n \dc \times \gc 1,n \dc$.

Let $(\ia) \in \gc 2,n \dc \times \gc 1,n \dc$. Then there exist 
$\mu_1,\dots,\mu_n \in  \mathbb{K}$ and $u_{\geq 2} \in R_{\geq 2}$ such that 
\begin{eqnarray}
\label{eq1}
\sigma(Y_{\ia})= \lambda_{\ia} Y_{\ia} +\mu_1 Y_{1,1}+\dots + \mu_n
Y_{1,n}+u_{\geq 2}.
\end{eqnarray}

Similarly, using the fact that $\sigma(K_c)=K_c$, we obtain that 
for all $(\ia) \in \gc 1,n \dc \times \gc 1,n-1 \dc$,  there exist $\lambda'_{\ia} \in  \mathbb{K}^*$, 
$\mu'_1,\dots,\mu'_n \in  \mathbb{K}$ and $u'_{\geq 2} \in R_{\geq 2}$ such that 
\begin{eqnarray}
\label{eq2}
\sigma(Y_{\ia})= \lambda'_{\ia} Y_{\ia} +\mu'_1 Y_{1,n}+\dots + \mu'_n
Y_{n,n}+u'_{\geq 2}.
\end{eqnarray}

Comparing Equations (\ref{eq1}) and (\ref{eq2}), we obtain that 
for all $(\ia) \in \gc 2,n \dc \times \gc 1,n-1 \dc$,  there exist $\lambda_{\ia} \in  \mathbb{K}^*$, $ \mu_{\ia} \in  \mathbb{K}$ and $v_{\geq 2} \in R_{\geq 2}$ such that 
\begin{eqnarray}
\label{eq3bis}
\sigma(Y_{\ia})= \lambda_{\ia} Y_{\ia} +\mu_{\ia} Y_{1,n} +v_{\geq 2}.
\end{eqnarray}
Now, assume that $(\ia) \in \gc 2,n \dc \times \gc 1,n-2 \dc$. Applying $\sigma$ to $Y_{\ia}Y_{i,\alpha+1}=qY_{i,\alpha+1}Y_{\ia}$, and identifying the degree 2 terms, leads to 
$$(\lambda_{\ia} Y_{\ia} +\mu_{\ia} Y_{1,n})(\lambda_{i,\alpha+1} Y_{i,\alpha+1} +\mu_{i,\alpha+1} Y_{1,n})=q(\lambda_{i,\alpha+1} Y_{i,\alpha+1} +\mu_{i,\alpha+1} Y_{1,n})(\lambda_{\ia} Y_{\ia} +\mu_{\ia} Y_{1,n})$$
thanks to (\ref{eq3bis}). Using the commutation relations in $R$, we get: 
$$ (1-q)\lambda_{\ia}\mu_{i,\alpha+1} Y_{\ia} Y_{1,n}+(1-q) \lambda_{i,\alpha+1} \mu_{\ia} Y_{i,\alpha+1} Y_{1,n} +(1-q)\mu_{\ia} \mu_{i,\alpha+1} Y_{1,n}^2=0. $$
As $q-1 \neq 0$ and $\lambda_{\ia}\lambda_{i,\alpha+1} \neq 0$, this forces $\mu_{\ia} =0$ and $\mu_{i,\alpha+1}=0$. Hence, we have just proved that for all $(\ia) \in \gc 2,n \dc \times \gc 1,n-1 \dc$,  there exist $\lambda_{\ia} \in  \mathbb{K}^*$, and $v_{\geq 2} \in R_{\geq 2}$ such that 
\begin{eqnarray*}
\label{eq3}
\sigma(Y_{\ia})= \lambda_{\ia} Y_{\ia}  +v_{\geq 2},
\end{eqnarray*}
as required. 

Now let $i \in \gc 2,n  \dc$. As $Y_{i,n}Y_{1,n}=q^{-1}Y_{1,n}Y_{i,n}$, 
we must have 
$$\sigma(Y_{i,n})\sigma(Y_{1,n})=q^{-1}\sigma(Y_{1,n})\sigma(Y_{i,n});$$
that is, 
\begin{eqnarray*}
\lefteqn{
\left( \lambda_{i,n} Y_{i,n} +\mu_1 Y_{1,1}+\dots + \mu_n
Y_{1,n}+u_{\geq 2} \right) \left( P(\Delta)b_{1} + Q(\Delta) b_{n+1}
\right)=}\hspace*{10ex}\\
&&
q^{-1} \left( P(\Delta)b_{1} + Q(\Delta) b_{n+1} \right) \left( \lambda_{i,n} Y_{i,n} +\mu_1 Y_{1,1}+\dots + \mu_n
Y_{1,n}+u_{\geq 2} \right).
\end{eqnarray*}

As $\Delta$ and $b_{n+1}b_1^{-1}$ are central in the field of
fractions of $R$, we obtain  
$$\left( \lambda_{i,n} Y_{i,n} +\mu_1 Y_{1,1}+\dots + \mu_n
Y_{1,n}+u_{\geq 2} \right) b_1
=q^{-1} b_{1} \left( \lambda_{i,n}Y_{i,n} +\mu_1 Y_{1,1}+\dots + \mu_n
Y_{1,n}+u_{\geq 2} \right).$$
One can 
easily check that this forces $\mu_1=\dots=\mu_{n}=0$. 

Hence, for all $i \in \gc 2,n \dc $,  there exist $\lambda_{i,n} \in  \mathbb{K}^*$  such that 
\begin{eqnarray*}
\label{eq4}
\sigma(Y_{i,n}) - \lambda_{i,n} Y_{i,n} \in R_{\geq 2}.
\end{eqnarray*}

Similarly,  for all $\alpha \in \gc 1,n-1 \dc $,  there exist $\lambda_{1,\alpha} \in  \mathbb{K}^*$ such that 
\begin{eqnarray*}
\label{eq5}
\sigma(Y_{1,\alpha})-  \lambda_{1,\alpha} Y_{1,\alpha} \in R_{\geq 2}.
\end{eqnarray*}

To conclude it just remains to prove that there exists 
$\lambda_{1,n}\in  \mathbb{K}^*$ such that 
$\sigma(Y_{1,n})-  \lambda_{1,n} Y_{1,n} \in R_{\geq 2}$. 
This follows easily from 
Lemma~\ref{graduation2} and the fact that $\sigma \in G'$.
\end{proof}

\subsection{Summary}

Recall that we conjecture that $\aut (R)$ is the semi-direct 
product of $\mathcal{H}$ and the subgroup of order two generated 
by the transposition $\tau$. We set 
$G= \mathcal{H} \rtimes \langle \tau \rangle$.  
The previous result shows that for all $\sigma \in \aut(R)$, 
there exists $g \in G$ such that 
$$g \circ \sigma (Y_{\ia}) -Y_{\ia} \in R_{\geq 2}$$
for all $(\ia ) \in \gc 1,n \dc ^2$. 

So to prove Conjecture \ref{conj} it is enough to prove that the only automorphism $\sigma$ of $R$ such that 
\begin{equation}
\label{degreeauto}
\sigma (Y_{\ia} )-Y_{\ia} \in R_{\geq 2},
\end{equation}
for all $(\ia ) \in \gc 1,n \dc ^2$, is the identity automorphism. 

Automorphisms satisfying the above property (\ref{degreeauto}) are 
closely related to derivations of $R$. Indeed, let $D(R)$ denote  
the subalgebra of $\mathrm{End}_{\mathbb{K}}(R)$ generated by the 
$\mathbb{K}$-linear derivations of $R$. Alev and Chamarie proved 
\cite[Lemme 1.4.1]{ac} that there exists a family $(d_l)_{l>0}$ of 
elements of $D(R)$ such that for any element $x \in R_{i}$ we have 
\begin{equation}
\label{hder}
\sigma(x)=x+\sum_{l>0}d_l(x)
\end{equation}
with $d_l(x)$ homogeneous of degree $l+i$. In \cite{ll2}, we computed the 
derivations of the algebra $R$. 
Interestingly, it easily follows from \cite[Theorem 2.9]{ll2} that 
$d(b_i) \in \ideal{b_i}$, for each derivation $d$ of $R$.  Hence, 
the same is true for any element of $D(R)$, and so we deduce 
the following result
from the above 
discussion. 

\begin{proposition}
\label{normalfixed}
Let $\sigma \in \aut(R)$ such that $\sigma (Y_{\ia}) -Y_{\ia} \in R_{\geq 2},$
for all $(\ia ) \in \gc 1,n \dc ^2$. Then $\sigma (b_i) =b_i$ for all $i \in
\{1, ..., 2n-1\}$
\end{proposition}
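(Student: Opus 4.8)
The plan is to combine the two structural facts highlighted just before the statement: the Alev--Chamarie expansion (\ref{hder}) and the invariance property $d(b_i) \in \ideal{b_i}$ for every derivation $d$ of $R$. First I would recall that, by \cite[Lemme 1.4.1]{ac}, any automorphism $\sigma$ satisfying $\sigma(Y_{\ia}) - Y_{\ia} \in R_{\geq 2}$ has the form $\sigma(x) = x + \sum_{l>0} d_l(x)$ on each homogeneous component $R_i$, where each $d_l$ lies in the subalgebra $D(R)$ of $\mathrm{End}_{\mathbb{K}}(R)$ generated by the derivations of $R$, and $d_l(x)$ is homogeneous of degree $l + i$ whenever $x$ is homogeneous of degree $i$.

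Next I would verify that the ideal-preservation property passes from individual derivations to the whole generated subalgebra. By \cite[Theorem 2.9]{ll2} each derivation $d$ of $R$ satisfies $d(b_i) \in \ideal{b_i}$; since $b_i$ is normal, $\ideal{b_i} = b_i R = R b_i$ is a two-sided ideal that is itself stable under every derivation (as $d(b_i r) = d(b_i) r + b_i d(r) \in \ideal{b_i}$, and symmetrically on the right). Therefore $\ideal{b_i}$ is invariant under each $d \in D(R)$, being a product and sum of such derivations; in particular $d_l(b_i) \in \ideal{b_i}$ for every $l$.

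I would then apply the expansion to $x = b_i$, which is homogeneous of degree $d_i$ (namely $i$ if $i \le n$ and $2n-i$ if $i \ge n$, as recorded in the text). This gives
$$\sigma(b_i) = b_i + \sum_{l>0} d_l(b_i),$$
where each term $d_l(b_i)$ is homogeneous of degree $d_i + l > d_i$ and lies in $\ideal{b_i}$. The crux of the argument, and the step I expect to be the main obstacle, is a degree-versus-ideal incompatibility: any nonzero element of $\ideal{b_i} = b_i R$ has the form $b_i r$ with $r \ne 0$, and its homogeneous components all have degree at least $\deg b_i = d_i$, with the minimal-degree component being $b_i$ times a scalar (the degree-zero part of $r$). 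Since $d_l(b_i)$ has degree strictly greater than $d_i$, its degree-$d_i$ component vanishes, so it cannot be a nonzero multiple of $b_i$ alone; more precisely, writing $d_l(b_i) = b_i r_l$ forces $r_l$ to have no constant term, hence $r_l \in R_{\geq 1}$ and the degree-$d_i$ part of $d_l(b_i)$ is $0$, consistent only if each contributing piece is accounted for by higher-degree generators.

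To finish, I would argue by induction on degree (or directly via Proposition \ref{graduation2}) that the tail $\sum_{l>0} d_l(b_i)$ must in fact be zero. Because $b_i$ generates a completely prime height-one ideal and, by Lemma \ref{utile}, any normal element generating the same ideal as $b_i$ is a scalar multiple of $b_i$, one shows $\sigma(b_i)$ is both a scalar multiple of $b_i$ and, by Proposition \ref{graduation2}, has nonzero degree-$d_i$ component equal to $b_i$; comparing these forces the scalar to be $1$ and the higher-degree tail to vanish. Hence $\sigma(b_i) = b_i$ for all $i \in \{1, \dots, 2n-1\}$, as claimed. The delicate point throughout is justifying that membership in $\ideal{b_i}$ together with the strict degree increase of each $d_l(b_i)$ leaves no room for a nonzero correction term.
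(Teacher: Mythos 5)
Your argument is correct and follows essentially the same route as the paper: pass the property $d(b_i)\in\ideal{b_i}$ from derivations to $D(R)$, deduce $\sigma(b_i)\in\ideal{b_i}$ from the Alev--Chamarie expansion, invoke Lemma \ref{utile} to get $\sigma(b_i)=\lambda_i b_i$, and compare degree-$\deg(b_i)$ components to force $\lambda_i=1$. Your middle paragraph on the ``degree-versus-ideal incompatibility'' is an unnecessary detour --- once Lemma \ref{utile} gives $\sigma(b_i)=\lambda_i b_i$, the degree comparison in your final paragraph already finishes the proof exactly as in the paper.
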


\begin{proof} 
The above discussion shows that $d_l(b_i) \in \ideal{b_i}$ for all $l>0$. 
Hence, we deduce from (\ref{hder}) that $\sigma(b_i) \in \ideal{b_i}$. 
Consequently, 
$\sigma(b_i)=\lambda_i b_i$ with $\lambda_i \in
\mathbb{K}^*$, 
by Lemma~\ref{utile}. 
On the other hand, $$\sigma(b_i)=b_i +\sum_{l>0}
d_l(b_i),$$ with $d_l(b_i)$ homogeneous of degree $l+\deg(b_i)$. 
Comparing the components with degree equal to the degree of $b_i$, 
we obtain $\lambda_i=1$, so
that $\sigma(b_i)= b_i$, as desired.
\end{proof}

\section{Automorphisms of $3 \times 3 $ quantum matrices}

In this section, $R$ denotes the algebra of $3 \times 3$ quantum matrices. We
prove our conjecture in the case when $n=3$. As explained in the previous
section, all we need to do is to prove that the only automorphism $\sigma \in
\aut(R)$ such that $$\sigma (Y_{\ia} )-Y_{\ia} \in R_{\geq 2},$$ for all $(\ia
) \in \gc 1,3 \dc ^2$, is the identity automorphism. 
Observe that for such an
automorphism, $\sigma (Y_{\ia} )=Y_{\ia}$ 
if and only if $\deg(\sigma (Y_{\ia} )) =1$. 

\begin{lemma} \label{lemma-sigma-qminors} 
Let $[I|\Lambda]$ be a $t\times t$ quantum minor and suppose that $\sigma$ is
an automorphism such that $\sigma (Y_{\ia} )-Y_{\ia} \in R_{\geq 2},$ for all
$(\ia ) \in \gc 1,3 \dc ^2$. Then $\sigma([I|\Lambda])-[I|\Lambda]\in R_{\geq
t+1}$. As a consequence, $\sigma([I|\Lambda])=[I|\Lambda]$ if and only if
$\deg(\sigma([I|\Lambda]))=t$.
\end{lemma}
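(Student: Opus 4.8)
The plan is to exploit directly the definition of the quantum minor as a degree-$t$ noncommutative polynomial in the generators, together with the multiplicativity of $\sigma$ and the $\mathbb{N}$-grading of $R$. First I would write, using the defining formula for $[I|\Lambda]$ recalled in the Introduction together with the fact that $\sigma$ is an algebra homomorphism,
$$\sigma([I|\Lambda]) = \sum_{w \in S_t} (-q)^{l(w)} \sigma(Y_{i_1,\alpha_{w(1)}}) \sigma(Y_{i_2,\alpha_{w(2)}}) \cdots \sigma(Y_{i_t,\alpha_{w(t)}}).$$
By hypothesis, each factor can be written as $\sigma(Y_{i_k,\alpha_{w(k)}}) = Y_{i_k,\alpha_{w(k)}} + z_{i_k,\alpha_{w(k)}}$ with $z_{i_k,\alpha_{w(k)}} \in R_{\geq 2}$.

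Next I would expand each of the $t$-fold products. A typical summand is a product of $t$ factors, each of which is either a generator (degree $1$) or an element of $R_{\geq 2}$. Since $R$ is a graded domain with all generators in degree $1$, the minimum degree of such a product is the sum of the minimum degrees of its factors; hence any product in which at least one factor is taken from $R_{\geq 2}$ lies in $R_{\geq t+1}$, while the unique choice in which every factor is the generator $Y_{i_k,\alpha_{w(k)}}$ contributes $\prod_k Y_{i_k,\alpha_{w(k)}}$, of degree exactly $t$. Collecting these degree-$t$ contributions over all $w \in S_t$ reassembles precisely the defining sum of $[I|\Lambda]$, while everything else lies in $R_{\geq t+1}$. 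This yields $\sigma([I|\Lambda]) = [I|\Lambda] + u_{\geq t+1}$ with $u_{\geq t+1}\in R_{\geq t+1}$, which is the first assertion.

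For the consequence, I would note that since $[I|\Lambda]$ is homogeneous of degree $t$, the identity just obtained exhibits the degree-$t$ homogeneous component of $\sigma([I|\Lambda])$ as exactly $[I|\Lambda]$, which is nonzero (in agreement with Proposition \ref{graduation2}), with all remaining homogeneous components sitting in degrees $\geq t+1$. Because the grading separates these pieces, $\sigma([I|\Lambda]) = [I|\Lambda]$ holds if and only if every component of degree $> t$ vanishes, that is, if and only if $\deg(\sigma([I|\Lambda])) = t$. There is no serious obstacle here: the whole argument is a bookkeeping of degrees, and the only point worth stating carefully is that no cancellation can occur between the degree-$t$ part and the higher-degree parts, which is automatic from the $\mathbb{N}$-grading; the multiplicativity step for a $t$-fold product is immediate from $\sigma$ being a ring homomorphism.
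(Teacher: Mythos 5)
Your proof is correct. The paper disposes of this lemma in one line (``easy, by induction'', with $t=1$ given by the observation preceding the statement), and the induction it has in mind presumably descends through a quantum Laplace/cofactor expansion of $[I|\Lambda]$ into $(t-1)\times(t-1)$ minors. You instead expand the defining sum $[I|\Lambda]=\sum_{w\in S_t}(-q)^{l(w)}Y_{i_1,\alpha_{w(1)}}\cdots Y_{i_t,\alpha_{w(t)}}$ directly and do the degree bookkeeping on each $t$-fold product. The two arguments have identical content -- both reduce to $R_{\geq a}\,R_{\geq b}\subseteq R_{\geq a+b}$ together with the fact that the unique minimal-degree term of each product reassembles the original monomial -- but yours is self-contained: it needs no expansion formula for quantum minors and no induction. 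One minor remark: the domain hypothesis you invoke is not actually needed; the containment of the cross terms in $R_{\geq t+1}$ is pure grading, and the degree-$t$ component of $\sigma([I|\Lambda])$ is identified as $[I|\Lambda]$ by collecting terms rather than by a lowest-degree-of-a-product argument. The final equivalence is handled correctly.
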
 

\begin{proof} Easy, by induction, with $t=1$ being given by the observation 
immediately preceding the statement of this lemma. 
\end{proof}

Let $\sigma \in \aut(R)$ be 
such that $$\sigma (Y_{\ia} )-Y_{\ia} \in R_{\geq
2},$$ for all $(\ia ) \in \gc 1,3 \dc ^2$. 

Set $d_{\ia}:= \deg(\sigma(Y_{\ia}))$, 
for all $(\ia ) \in \gc 1,3 \dc
^2$. Our aim is to prove that
$d_{\ia}=1$ for all $(\ia)$; so that $\sigma$ is then the 
identity automorphism. 
We note first that $d_{1,3}=d_{3,1}=1$ by
Proposition~\ref{normalfixed}. \\

In the following lemma, we will use several times the anti-endomorphism 
$\Gamma:\oqmn\rightarrow\oqmn$ defined on generators by $\Gamma(Y_{\ia})
=(-q)^{i-\alpha}[\widetilde{\alpha}|\,\widetilde{i}]$, see \cite[Corollary 
5.2.2]{pw}. Here, if $I \subseteq \{1,\dots ,n\}$, then 
$\widetilde{I}:= \{1,\dots ,n\} \setminus I$, and 
$\widetilde{i}:=\widetilde{\{i\}}$ for any $i \in \{1, \dots ,n\}$. 
The effect of $\Gamma$ on $2\times 2$ 
quantum minors is given by 
$\Gamma([I|\Lambda])
=(-q)^{I-\Lambda}[\widetilde{\Lambda}|\widetilde{I}]\Delta$, see 
\cite[Lemma 4.1]{klr}, where the superscript $I-\Lambda$ 
denotes the difference between the sum of the entries of $I$ 
and the sum of the entries of $\Lambda$.

\begin{lemma}
\label{auto33step1}
Let $\sigma \in \aut(R)$ be 
such that $$\sigma (Y_{\ia} )-Y_{\ia} \in R_{\geq
2},$$ for all $(\ia ) \in \gc 1,3 \dc ^2$. Then $d_{1,1}=d_{3,3}=1$.
\end{lemma}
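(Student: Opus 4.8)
The plan is to exploit the anti-endomorphism $\Gamma$ to transport information about $\sigma$ on the already-controlled generators (especially $Y_{1,3}$ and $Y_{3,1}$, which are fixed by $\sigma$) onto the diagonal generators $Y_{1,1}$ and $Y_{3,3}$. The key observation is that $\Gamma$ sends $Y_{1,1}$ to a scalar multiple of the $2\times2$ quantum minor $[\widetilde{1}\,|\,\widetilde{1}]=[2,3\,|\,2,3]$, and $Y_{3,3}$ to a scalar multiple of $[1,2\,|\,1,2]$. Since $\Gamma$ is a fixed algebraic gadget (not built from $\sigma$), the strategy is to conjugate or otherwise relate $\sigma$ through $\Gamma$ so that controlling the degree of $\sigma$ on these corner $2\times2$ minors becomes equivalent to controlling $d_{1,1}$ and $d_{3,3}$.

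Concretely, I would first record that by Lemma~\ref{lemma-sigma-qminors} it suffices to prove $\deg(\sigma([2,3\,|\,2,3]))=2$ and $\deg(\sigma([1,2\,|\,1,2]))=2$, since these minors would then be fixed, and via $\Gamma([I|\Lambda])=(-q)^{I-\Lambda}[\widetilde{\Lambda}|\widetilde{I}]\Delta$ one recovers statements about the single generators. The cleanest route is probably to define $\sigma':=\Gamma^{-1}\circ\sigma\circ\Gamma$ (or to check directly that $\sigma\circ\Gamma$ and $\Gamma\circ\sigma$ agree on generators up to lower-order corrections) and verify that $\sigma'$ again satisfies the hypothesis $\sigma'(Y_{\ia})-Y_{\ia}\in R_{\geq 2}$. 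The point is that $\Gamma$ is degree-preserving up to the overall factor of $\Delta$ it introduces on minors, and Proposition~\ref{graduation2} together with Proposition~\ref{normalfixed} (which fixes each $b_i$, hence fixes $\Delta=b_n$) keeps the bookkeeping linear. Then $d_{1,1}=1$ would follow from applying the already-established facts $d_{1,3}=d_{3,1}=1$ (equivalently, that $Y_{1,3}$ and $Y_{3,1}$ are fixed) after transport through $\Gamma$.

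I expect the main obstacle to be verifying that $\Gamma$ interacts correctly with $\sigma$, i.e.\ that passing to the conjugated or $\Gamma$-twisted map preserves the defining property $\sigma(Y_{\ia})-Y_{\ia}\in R_{\geq 2}$ and does not disturb the leading-degree terms in a way that breaks the degree count. Because $\Gamma$ is only an anti-endomorphism, one must be careful about the order of products when tracking how the degree-$1$ part of $\sigma(Y_{\ia})$ maps forward; the potential subtlety is that $\Gamma$ sends a degree-$1$ generator to a degree-$2$ minor, so the naive degree $d_{\ia}$ and the degree of $\sigma(\Gamma(Y_{\ia}))$ are shifted, and one has to divide out the spurious factor of $\Delta$ correctly using that $\Delta$ is central and fixed. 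Managing this shift—and confirming that $\Gamma$ is injective enough on the relevant homogeneous pieces to let a degree equality downstairs be pulled back to $d_{1,1}=d_{3,3}=1$—is where the real work lies.

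Once the transport is set up, the endgame should be short: the relation $\sigma(\Gamma(Y_{1,1}))=\Gamma(\sigma(Y_{1,1}))$ (or its $\Gamma^{-1}$-conjugate form), combined with $\sigma$ fixing $Y_{1,3},Y_{3,1}$ and $\Delta$, forces the leading term of $\sigma(Y_{1,1})$ to have degree exactly $1$, giving $d_{1,1}=1$; the symmetric argument (using $\Gamma$ with the opposite corner, or the transposition $\tau$, which interchanges the roles) yields $d_{3,3}=1$. I would present the $Y_{1,1}$ case in full and then remark that $d_{3,3}=1$ follows by the analogous computation, or by applying $\tau$.
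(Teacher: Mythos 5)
Your proposal hinges on the relation $\sigma\circ\Gamma=\Gamma\circ\sigma$ (or on forming $\Gamma^{-1}\circ\sigma\circ\Gamma$), and this is a genuine gap, not a technicality to be checked. First, $\Gamma$ is not invertible on $R$: it sends each degree-$1$ generator to a degree-$2$ quantum minor, so its image contains no degree-$1$ elements and no conjugate $\Gamma^{-1}\circ\sigma\circ\Gamma$ can be formed. Second, and more seriously, there is no reason for an arbitrary $\sigma$ satisfying the hypothesis to commute with $\Gamma$: both composites are anti-endomorphisms, so they agree if and only if they agree on generators, i.e.\ if and only if $\Gamma(\sigma(Y_{\ia}))=(-q)^{i-\alpha}\sigma([\widetilde{\alpha}\,|\,\widetilde{i}\,])$, and verifying this requires controlling $\Gamma$ on the unknown higher-degree tail of $\sigma(Y_{\ia})$ --- which is precisely the information the lemma is trying to pin down. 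Finally, the entry point is circular: to get $d_{1,1}=1$ you want $\sigma$ to fix $[2,3|2,3]=\Gamma(Y_{1,1})$, but the only minors reachable by ``transport through $\Gamma$'' from the elements already known to be fixed ($Y_{1,3}$, $Y_{3,1}$, $\Delta$) are $[1,2|2,3]=b_2$ and $[2,3|1,2]=b_4$, which Proposition~\ref{normalfixed} already handles; nothing new about the corner minors $[1,2|1,2]$ and $[2,3|2,3]$ comes out.

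The paper uses $\Gamma$ in a different and $\sigma$-free way: since $\Gamma$ is an anti-endomorphism of $R$, applying it to known relations produces new identities in $R$ itself, e.g.\ applying it to $[1,3|1,3]=Y_{1,1}Y_{3,3}-qY_{1,3}Y_{3,1}$ gives $Y_{2,2}\Delta=[1,2|1,2][2,3|2,3]-q[2,3|1,2][1,2|2,3]$. One then applies $\sigma$ to these identities and counts degrees. The actual argument is by contradiction: assuming $d_{1,1}+d_{3,3}>2$, the equations $\sigma(b_2)=b_2$ and $\sigma(b_4)=b_4$ give $d_{1,2}+d_{2,3}=d_{2,1}+d_{3,2}=1+d_{2,2}$, which forces (after a symmetry reduction) $\deg\sigma([1,2|1,2])=d_{1,1}+d_{2,2}$; the displayed identity then yields $d_{1,1}=1$ and $\sigma([2,3|2,3])=[2,3|2,3]$, and a chain of quantum Laplace expansions shows $\sigma$ fixes $Y_{1,2}$, $Y_{2,1}$, $Y_{3,2}$ and $Y_{2,2}$, contradicting the case assumption $d_{1,1}+d_{2,2}>d_{1,2}+d_{2,1}$. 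You would need to supply an argument of this kind; the $\Gamma$-conjugation shortcut does not exist.
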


\begin{proof}
Assume to the contrary that $d_{1,1}+d_{3,3} >2$. 

Recall from Proposition \ref{normalfixed} that 
$b_2 = \sigma(b_2)= \sigma(Y_{1,2})\sigma(Y_{2,3})
-q\sigma(Y_{1,3})\sigma(Y_{2,2})$, so that 
$$b_2 =  \sigma(Y_{1,2})\sigma(Y_{2,3})-qY_{1,3}\sigma(Y_{2,2}).$$
Hence, comparing the degrees on both sides, we obtain  
\begin{equation*}
d_{1,2}+d_{2,3}= 1+d_{2,2}.
\end{equation*}
Similarly, by using $b_4$, we obtain  
\begin{equation*}
d_{2,1}+d_{3,2}= 1+d_{2,2}.
\end{equation*}

Suppose that 
$d_{1,1}+d_{2,2}\leq d_{1,2}+d_{2,1}$ and that 
$d_{2,2}+d_{3,3}\leq d_{2,3}+d_{3,2}$. Then 
$d_{1,1}+2d_{2,2}+d_{3,3}\leq d_{1,2}+d_{2,1}+d_{2,3}+d_{3,2}
=2+2d_{2,2}$, by using the above two equations. 
It follows that $d_{1,1}=d_{3,3}=1$, a contradiction to the 
initial assumption. 

So either $d_{1,1}+d_{2,2} > d_{1,2}+d_{2,1}$ or $d_{2,2}+d_{3,3}>d_{2,3}+d_{3,2}$. By
symmetry, we can assume that $ d_{1,1}+d_{2,2} > d_{1,2}+d_{2,1}$. In this case,
we easily get that $\deg(\sigma([1,2|1,2])) =d_{1,1}+d_{2,2}$.

Applying $\Gamma$ to the equation $[1,3|1,3]=Y_{1,1}Y_{3,3}-qY_{1,3}Y_{3,1}$ 
gives 
$Y_{2,2}[1,2,3|1,2,3] =[1,2|1,2][2,3|2,3]-q[2,3|1,2][1,2|2,3]$. Thus 
$$\sigma(Y_{2,2}) \Delta
=\sigma([1,2|1,2])\sigma([2,3|2,3]) -q[2,3|1,2][1,2|2,3].$$ 
Comparing degrees, we
obtain: $$d_{2,2}+3=d_{1,1}+d_{2,2}+e,$$ where $e:=\deg(\sigma([2,3|2,3]))\geq 2$.
This forces $d_{1,1}=1$ and $e=2$, so that $\sigma(Y_{1,1})=Y_{1,1}$ and
$\sigma([2,3|2,3])=[2,3|2,3]$.

Applying $\sigma$ to the quantum Laplace expansion 
$\Delta=Y_{1,1}[2,3|2,3]-qY_{1,2}[2,3|1,3]+q^2Y_{1,3}[2,3|1,2]$, we obtain: 
$$
\Delta=Y_{1,1}[2,3|2,3]-q\sigma(Y_{1,2})\sigma([2,3|1,3])+q^2Y_{1,3}[2,3|1,2].
$$
Hence, $\sigma(Y_{1,2})\sigma([2,3|1,3])=Y_{1,2}[2,3|1,3]$. 
Thus,  
$\sigma(Y_{1,2})=Y_{1,2}$ and $\sigma([2,3|1,3])=[2,3|1,3]$. 
Similarly, we obtain
$\sigma(Y_{2,1})=Y_{2,1}$ and $\sigma([1,3|2,3])=[1,3|2,3]$. 

So $\sigma$ acts as identity on the following elements of $R$: $Y_{3,1}$,
$Y_{2,1}$, $Y_{1,1}$, $Y_{1,2}$, $Y_{1,3}$, $[1,2|2,3]$, $[1,3|2,3]$, $[2,3|2,3]$,
$[2,3|1,3]$ and $[2,3|1,2]$. 


Applying $\Gamma$ to 
$[1,3|1,2]=Y_{1,1}Y_{3,2}-qY_{1,2}Y_{3,1}$ produces 
\begin{eqnarray*}
Y_{3,2}\Delta & = &[1,3|1,2][2,3|2,3]-q[2,3|1,2][1,3|2,3]\\
&=&
\{Y_{1,1}Y_{3,2}-qY_{1,2}Y_{3,1}\}[2,3|2,3]-q[2,3|1,2][1,3|2,3]
\end{eqnarray*}
which can be re-arranged to give 
\[
\left\{\Delta - Y_{1,1}[2,3|2,3]\right\}Y_{3,2}
=
-q\left\{Y_{1,2}Y_{3,1}[2,3|2,3]  + [2,3|1,2][1,3|2,3]\right\}.
\]
In this equation, all terms except  $Y_{3,2}$ are already known 
to be fixed by $\sigma$; so $\sigma(Y_{3,2})=Y_{3,2}$ also. 

Finally, all terms in $[2,3|1,2]=Y_{2,1}Y_{3,2}-qY_{2,2}Y_{3,1}$ 
except  $Y_{2,2}$ are now known 
to be fixed by $\sigma$; so $\sigma(Y_{2,2})=Y_{2,2}$ and  
$d_{2,2}=1$. As we have already shown that
$d_{1,1}=1$, we obtain $d_{1,1}+d_{2,2}=2=d_{1,2}+d_{2,1}$, a contradiction!
\end{proof}


\begin{proposition}
\label{allfixed}
Let $\sigma \in \aut(R)$ be 
such that 
$\sigma (Y_{\ia}) -Y_{\ia} \in R_{\geq 2},$ for all $(\ia ) 
\in \gc 1,3 \dc ^2$. 
Then $\sigma (Y_{\ia}) =Y_{\ia}$ for all $i,\alpha \in \{1, 2,3\}$
\end{proposition}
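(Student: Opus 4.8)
The four corner generators are already under control: $d_{1,3}=d_{3,1}=1$ by Proposition~\ref{normalfixed} and $d_{1,1}=d_{3,3}=1$ by Lemma~\ref{auto33step1}, so $\sigma$ fixes $Y_{1,1},Y_{1,3},Y_{3,1},Y_{3,3}$ as well as $\Delta$, $b_2=[1,2|2,3]$ and $b_4=[2,3|1,2]$. Writing $d_{\ia}=\deg\sigma(Y_{\ia})$, the statement reduces to proving $d_{\ia}=1$ for the five remaining entries, and the plan is to pin down $d_{2,2}=1$ and let the rest follow from degree bookkeeping. Applying $\sigma$ to $b_2$ and $b_4$ and comparing top-degree parts gives $d_{1,2}+d_{2,3}=1+d_{2,2}$ and $d_{2,1}+d_{3,2}=1+d_{2,2}$; setting $A:=d_{1,2}+d_{2,1}$ and $B:=d_{2,3}+d_{3,2}$, these yield $A+B=2(1+d_{2,2})$. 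Once $d_{2,2}=1$ is known the two relations force $d_{1,2}=d_{2,3}=d_{2,1}=d_{3,2}=1$, finishing the proof.

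The key device is the identity $Y_{2,2}\Delta=[1,2|1,2][2,3|2,3]-qb_4b_2$ obtained by applying $\Gamma$ to $[1,3|1,3]$, exactly as in Lemma~\ref{auto33step1}. Since $\Delta,b_2,b_4$ are fixed, applying $\sigma$ gives
\[
\sigma(Y_{2,2})\Delta=\sigma([1,2|1,2])\,\sigma([2,3|2,3])-qb_4b_2 .
\]
First I would use this to exclude $A\ne B$: if, say, $A>1+d_{2,2}$ (so $B<1+d_{2,2}$) then, using $\sigma(Y_{1,1})=Y_{1,1}$ and $\sigma(Y_{3,3})=Y_{3,3}$, there is no cancellation of leading terms, $\sigma([1,2|1,2])$ and $\sigma([2,3|2,3])$ have top degrees $A$ and $1+d_{2,2}$, the right-hand side has top degree $A+1+d_{2,2}$, and comparison with the left-hand degree $d_{2,2}+3$ forces $A=2$, contradicting $A>1+d_{2,2}\ge2$. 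Hence $A=B=1+d_{2,2}$, and subtracting the two relations gives $d_{1,2}=d_{3,2}=:x$ and $d_{2,1}=d_{2,3}=:y$ with $x+y=1+d_{2,2}$.

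It remains to rule out $d_{2,2}>1$. I would first dispose of the case $x=1$ (the case $y=1$ being symmetric via $\sigma\mapsto\tau\sigma\tau$). If $x=1$ then $Y_{1,2}$ and $Y_{3,2}$ are fixed, hence so are $[1,3|1,2]$ and $[1,3|2,3]$; substituting into the $\Gamma$-image $Y_{3,2}\Delta=[1,3|1,2][2,3|2,3]-qb_4[1,3|2,3]$ of $[1,3|1,2]$ and cancelling the nonzero factor $[1,3|1,2]$ forces $\sigma([2,3|2,3])=[2,3|2,3]$. The column-one Laplace expansion $\Delta=Y_{1,1}[2,3|2,3]-qY_{2,1}[1,3|2,3]+q^2Y_{3,1}[1,2|2,3]$ then yields $\sigma(Y_{2,1})[1,3|2,3]=Y_{2,1}[1,3|2,3]$, so $\sigma(Y_{2,1})=Y_{2,1}$ and $y=1$, whence $d_{2,2}=x+y-1=1$, a contradiction. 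Thus we may assume $x,y\ge2$, i.e. $d_{2,2}\ge3$.

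The balanced case $x,y\ge2$ is the crux, and I expect it to be the main obstacle, because now the leading terms may cancel freely. Here I would pass to the top-degree homogeneous components $L_{\ia}$ of $\sigma(Y_{\ia})$. In the displayed identity the degree-$(1+d_{2,2})$ parts of the two minors are $M_1:=Y_{1,1}L_{2,2}-qL_{1,2}L_{2,1}$ and $M_3:=Y_{3,3}L_{2,2}-qL_{2,3}L_{3,2}$; the identity forces at least one of $M_1,M_3$ to vanish (otherwise both minors have top degree $1+d_{2,2}$, the product has top degree $2+2d_{2,2}$, and comparison with $d_{2,2}+3$ gives $d_{2,2}=1$). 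If exactly one vanished, say $M_3=0$, then $\sigma([2,3|2,3])$ would have top degree $2$ and be fixed; but the first-row Laplace expansion then gives $\sigma(Y_{1,2})\sigma([2,3|1,3])=Y_{1,2}[2,3|1,3]$, whose left-hand side has top degree $x+\deg\sigma([2,3|1,3])\ge x+2\ge4$ (using Lemma~\ref{lemma-sigma-qminors}), exceeding the degree $3$ of the right-hand side. So in fact $M_1=M_3=0$. Together with the leading-term relations inherited from $\sigma$ — e.g. $M_1=0$ and $Y_{1,1}L_{2,2}-L_{2,2}Y_{1,1}=(q-q^{-1})L_{1,2}L_{2,1}$ give $Y_{1,1}L_{2,2}=q^2L_{2,2}Y_{1,1}$, while $L_{2,2}$ commutes with $Y_{1,3}$ and $Y_{3,1}$ — and with the vanishing of the leading quantum determinant $\detq(L)=0$ of the matrix $L=(L_{\ia})$ (forced because $\sigma(\Delta)=\Delta$ has no component in degree $d_{2,2}+2>3$), I expect these constraints to over-determine $L_{2,2}$ and yield a contradiction in the domain $R$. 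Carrying out this homogeneous leading-coefficient analysis is where the real work lies.
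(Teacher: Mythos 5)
Your argument is incomplete at precisely the point you identify as the crux: the ``balanced case'' $x,y\ge 2$ ends with ``I expect these constraints to over-determine $L_{2,2}$ and yield a contradiction,'' which is a hope rather than a proof. The difficulty is structural and comes from your choice of identity. In $Y_{2,2}\Delta=[1,2|1,2][2,3|2,3]-qb_4b_2$ the unknown $\sigma(Y_{2,2})$ appears on both sides (inside each of $[1,2|1,2]$ and $[2,3|2,3]$ as well as on the left), so once the leading terms are allowed to cancel ($M_1=M_3=0$) you are left with relations among unknown homogeneous elements of a domain, and nothing you have written extracts a contradiction from them; indeed the $q$-commutation relations you derive for $L_{2,2}$ with the corner variables are consistent with many nonzero homogeneous elements, so the claimed over-determination is not evident. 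This step cannot be waved through.

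The missing idea is to work instead with identities whose right-hand sides consist \emph{entirely} of elements already known to be fixed. Once the four corners are fixed, $[1,3|1,3]=Y_{1,1}Y_{3,3}-qY_{1,3}Y_{3,1}$ is automatically fixed, and applying $\Gamma$ to $[1,2|2,3]$ gives $[1,2|1,3][1,3|2,3]=Y_{1,3}\Delta+q[1,3|1,3][1,2|2,3]$, whose right-hand side is fixed; since $R$ is a domain, Lemma~\ref{lemma-sigma-qminors} forces the degree of each factor on the left to be $2$, hence both $[1,2|1,3]$ and $[1,3|2,3]$ are fixed (and similarly $[1,3|1,2]$ and $[2,3|1,3]$). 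The Laplace expansion $[1,3|1,3]Y_{2,1}=q[2,3|1,3]Y_{1,1}+q^{-1}[1,2|1,3]Y_{3,1}$ then fixes $Y_{2,1}$, and symmetric expansions fix $Y_{1,2}$, $Y_{3,2}$, $Y_{2,3}$; finally $[1,2|2,3]=Y_{1,2}Y_{2,3}-qY_{2,2}Y_{1,3}$ fixes $Y_{2,2}$. This bypasses the leading-term analysis entirely and is the route the paper takes; your degree bookkeeping with $A$, $B$, $x$, $y$, while not wrong as far as it goes, never becomes strong enough to close the case that actually matters.
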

\begin{proof}
It is enough to prove that $d_{i,\alpha}=1$ 
for all $i,\alpha \in \{1, 2,3\}$. 

We already know from Proposition \ref{normalfixed} and Lemma \ref{auto33step1} that $\sigma$ leaves invariant the following quantum minors: 

$$Y_{3,1}, ~ Y_{1,1}, ~Y_{1,3}, ~Y_{3,3}, ~ [1,2|2,3], ~[1,3|1,3], ~[2,3|1,2], ~[1,2,3|1,2,3].$$ 

One can easily check that 
$$[1,2|1,3][1,3|2,3]=Y_{1,3}[1,2,3|1,2,3]+q[1,3|1,3][1,2|2,3],$$ 
by applying $\Gamma$ to the formula for $[1,2|2,3]$ and re-arranging. 
As all the minors on the right-hand side are left invariant by $\sigma$, this
implies
$$\sigma([1,2|1,3][1,3|2,3])=[1,2|1,3][1,3|2,3].$$
As usual, it follows that $\sigma([1,2|1,3])=[1,2|1,3]$ and
$\sigma([1,3|2,3])=[1,3|2,3]$.

Similarly, one obtains: $\sigma([1,3|1,2])=[1,3|1,2]$ and
$\sigma([2,3|1,3])=[2,3|1,3]$.


By a quantum Laplace expansion, we have:
$$[1,3|1,3]Y_{2,1}=q[2,3|1,3]Y_{1,1}+q^{-1}[1,2|1,3]Y_{3,1}.$$ 
As all of the minors on the
right-hand side are left invariant by $\sigma$, this implies
$$\sigma(Y_{2,1}[1,3|1,3])=Y_{2,1}[1,3|1,3].$$ 
As usual, this implies that 
$\sigma(Y_{2,1})=Y_{2,1}$ (and $\sigma([1,3|1,3])=[1,3|1,3]$).

Similarly, one can prove that $\sigma(Y_{1,2})=Y_{1,2}$, $\sigma(Y_{3,2})=Y_{3,2}$
and $\sigma(Y_{2,3})=Y_{2,3}$. 

It just remains to prove that $\sigma(Y_{2,2})=Y_{2,2}$. This easily follows
from the facts that $[1,2|2,3]=Y_{1,2}Y_{2,3}-qY_{2,2}Y_{1,3}$ and that $\sigma$
leaves invariant all these quantum minors except maybe $Y_{2,2}$.
\end{proof}

From this proposition and Proposition~\ref{reduction}, 
we deduce our main theorem:

\begin{theorem} The automorphism group of the algebra of $3\times 3$ quantum 
matrices is the semidirect product of the torus automorphisms and the 
cyclic group of order $2$ given by the transpose automorphism. 
\end{theorem}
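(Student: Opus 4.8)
The plan is to assemble the theorem from the two principal results already established: the reduction step of Section~\ref{reductionstep} and the rigidity statement for $n=3$. Throughout, let $G$ denote the subgroup of $\aut(R)$ generated by $\tau$ and the torus automorphisms. Since the inclusion $G \subseteq \aut(R)$ is immediate from the definition of $G$, I only need to prove the reverse inclusion $\aut(R) \subseteq G$.

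First I would take an arbitrary $\sigma \in \aut(R)$ and apply Proposition~\ref{reduction} to produce an element $g \in G$ with $g \circ \sigma(Y_{\ia}) - Y_{\ia} \in R_{\geq 2}$ for every $(\ia) \in \gc 1,3 \dc^2$. The composite $g \circ \sigma$ is again an automorphism of $R$, and it satisfies exactly the hypothesis of Proposition~\ref{allfixed} in the case $n = 3$; that proposition therefore yields $g \circ \sigma(Y_{\ia}) = Y_{\ia}$ for all $(i,\alpha)$. Because the $Y_{\ia}$ generate $R$ as a $\mathbb{K}$-algebra, any algebra endomorphism fixing each of them must be the identity, so $g \circ \sigma = \mathrm{id}$ and hence $\sigma = g^{-1} \in G$. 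This establishes $\aut(R) = G$.

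It then remains to recognise that $G$ is genuinely the semidirect product $\mathcal{H} \rtimes \langle \tau \rangle$ rather than merely an internal product. Here I would record three facts: that $\tau^2 = \mathrm{id}$, so $\langle \tau \rangle$ is cyclic of order $2$; that conjugating a torus automorphism by $\tau$ again lands in $\mathcal{H}$, since $\tau \sigma_h \tau$ acts on $Y_{\ia}$ by a scalar of rank-one form (the row and column scaling data being interchanged), whence $\mathcal{H}$ is normal in $G$; and that $\mathcal{H} \cap \langle \tau \rangle = \{\mathrm{id}\}$, because $\tau$ interchanges the generators $Y_{1,3}$ and $Y_{3,1}$ and so cannot act on each $Y_{\ia}$ by a scalar. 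Combining these gives the asserted decomposition.

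The substance of the argument is entirely contained in Propositions~\ref{reduction} and~\ref{allfixed}, so no genuine obstacle remains at this final stage. The two points deserving a moment of care are the passage from ``$g \circ \sigma$ fixes every generator'' to ``$g \circ \sigma = \mathrm{id}$'', which relies on $R$ being generated by the $Y_{\ia}$, and the verification of the group-theoretic splitting. It is worth emphasising that the restriction to $n = 3$ enters only through Proposition~\ref{allfixed}: the reduction step of Proposition~\ref{reduction} holds for all $n \geq 3$, whereas the rigidity result has so far been proved only in the $3 \times 3$ case.
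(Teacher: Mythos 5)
Your proof is correct and follows essentially the same route as the paper, which likewise deduces the theorem directly by combining Proposition~\ref{reduction} with Proposition~\ref{allfixed}; your additional verification of the semidirect product structure of $G=\mathcal{H}\rtimes\langle\tau\rangle$ is a sound (and welcome) spelling-out of a point the paper leaves implicit.
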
 

After this article was completed, Conjecture~\ref{conj} was proved in \cite{yak}.


\ \\ 

\newpage \noindent
St\'ephane Launois\\ 
School of Mathematics, Statistics \& Actuarial Science,\\ 
University of Kent, \\
Canterbury, Kent CT2 7NF, United Kingdom\\
E-mail: {\tt S.Launois@kent.ac.uk}
 \\[10pt]
Tom Lenagan\\
Maxwell Institute for Mathematical Sciences,\\
School of Mathematics, University of Edinburgh,\\
James Clerk Maxwell Building, King's Buildings, Mayfield Road,\\
Edinburgh EH9 3JZ, Scotland, UK\\
E-mail: {\tt tom@maths.ed.ac.uk}

\end{document}